\newcommand{\ds}{\displaystyle}
\newcommand{\reals}{\mathbb{R}}
\newcommand{\realstwo}{\mathbb{R}^2}
\newcommand{\realsthree}{\mathbb{R}^3}
\newcommand{\xb}{{\bf{x}}}
\newcommand{\Dx}{{\partial_x}}
\newcommand{\pd}{{\partial}}
\newcommand{\cT}{{\mathscr{T}}}
\newcommand{\Dn}{\partial_{\nu}}
\newcommand{\Dz}{{\partial_z}}
\newcommand{\cF}{\mathscr{F}}
\newcommand{\R}{\mathbb{R}}
\newcommand{\Om}{{\Omega}}
\newcommand{\bH}{\mathbf{H}}
\newcommand{\cH}{\mathcal{H}}
\theoremstyle{plain}
\newtheorem{theorem}{Theorem}[section]
\newtheorem{condition}{Condition}
\newtheorem{lemma}[theorem]{Lemma}
\newtheorem{corollary}[theorem]{Corollary}
\theoremstyle{remark}
\newtheorem{remark}{Remark}[section]
\numberwithin{equation}{section}
\numberwithin{theorem}{section}
\numberwithin{remark}{section}
\title{Von Karman plate in a gas flow:  recent results and conjectures}
\date{\today}
 \author{\normalsize \begin{tabular}[t]{c@{\extracolsep{.6em}}c@{\extracolsep{.6em}}c@{\extracolsep{.6em}}c}
         Igor Chueshov  & Earl H. Dowell  & Irena Lasiecka & Justin T. Webster  \\
\it        Kharkov National Univ. & \it Duke University & \it Univ. of Memphis    &\it  College of Charleston \\
\it        Kharkov, Ukraine & \it Durham, NC & \it Memphis, TN &\it Charleston, SC\\
\it       chueshov@karazin.ua & dowell@duke.edu&  \it lasiecka@memphis.edu&  \it websterj@cofc.edu
\end{tabular}}
\begin{document}
\maketitle

\begin{abstract} {\noindent
We give a survey of recent results on
flow-structure interactions  modeled  by a modified wave equation coupled at an interface with equations of nonlinear elasticity. 
Both subsonic and supersonic flow velocities are considered.  The focus of the discussion here is on the interesting {\em mathematical aspects} of physical phenomena occurring in aeroelasticity, such as flutter 
and divergence. This leads to a partial differential equation (PDE) treatment of issues such as well-posedness of finite energy solutions, and long-time (asymptotic) behavior. The latter includes theory of asymptotic stability, convergence to equilibria, and to  global attracting sets. 
We complete the discussion with  several well known observations and conjectures based on experimental/numerical studies.\\
\noindent {\bf Key terms}: flow-structure interaction, nonlinear plate, flutter,
 well-posedness, long-time dynamics, global attractors. }
\end{abstract}

\section{Introduction}

The main goal of this paper is to provide
a  mathematical survey of various PDE models for fluttering plates \cite{bal0,bolotin,dowellnon,dowell1,gibbs,tang}. The {\em flutter pheonmenon} is of great interest across many fields. Flutter is a sustained, systemic instability which occurs as a feedback coupling between a thin structure and an inviscid fluid.
%
%
%
%
A static bifurcation may also occur, known as {\em divergence} or buckling. 
%
Here we consider flow-plate dynamics corresponding to large range of  flow velocities  including  both  {\it subsonic and supersonic.}

Flow-structure models have attracted considerable attention in the past mathematical literature, see, e.g., \cite{bal4,bal3,b-c, LBC96,b-c-1,chuey,springer,jadea12,dowellnon,webster}
and the references therein.
However, the majority of the work that
has been done on flow-structure interactions has been devoted to numerical and experimental studies, see,
for instance,
\cite{bal0,bolotin,dowell1,dowellnon,HP02} and also the survey~\cite{Li03}
and the literature cited there.  Many mathematical studies have been based on linear, two dimensional, plate models  with specific geometries, where  the primary goal was to determine the speed at which flutter occurs
\cite{bal0,bolotin,dowell1,HP02,Li03}, see also \cite{bal4,shubov2,bal-shubov,bal-shubov1,shubov1} for the recent 
studies  of linear models with a one dimensional flag-type structure.
%
%

Given the difficulty of modeling coupled PDEs at an interface \cite{cbms,redbook}, theoretical results have been sparse.
%
%
While numerical studies address various situations, they are based on finite dimensional approximations of continuum models fully described by PDEs \cite{bal0,bolotin,b-c}. The infinite dimensional nature of the physical phenomena may not be adequately reflected by these approximations. 
%
%
The results presented herein have demonstrated that the models can be studied from an infinite dimensional point of view, and moreover that meaningful statements can be made about the physical mechanisms in flow-structure interactions {\em strictly from the PDE model}.  
The problems in the PDE analysis revolve around (i) the mismatch of regularity between two types of dynamics: the flow and the structure  which are coupled in a hybrid way,  
 (ii) the physically required presence of unbounded or ill-defined terms (boundary traces in the coupling conditions), and (iii) intrinsically non-dissipative generators 
of dynamics, even in the linear case. (The latter are associated with potential chaotic behavior.) 


 This paper specifically addresses the interactive dynamics between a nonlinear plate and a surrounding potential flow \cite{bolotin,dowellnon}.
%
%
  The class of models  under consideration  is standard in the modeling of flow-structure interactions, and goes back to classical discussions  (\cite{bolotin,dowellnon}, and also \cite{B,dowell1} and the references therein).
The description of  physical phenomena such as flutter, divergence or buckling  will translate into mathematical questions  related to 
 existence of  nonlinear semigroups representing  a  given dynamical system,  asymptotic stability of trajectories, and convergence  to equilibria or to compact attracting sets. 
   Interestingly enough, different model descriptions (including boundary conditions)  lead to an array of  diverse mathematical issues that involve  not only classical  PDEs, but  subtle questions in 
  non-smooth elliptic theory, harmonic analysis, and singular operator theory.  
Thus, in this paper, we concentrate primarily on mathematically rigorous aspects of modeling aeroelastic dynamics and their analysis. For more details concerning  the relationship of the theory developed with numerics and experiments we refer to the paper \cite{CDLW-eng}.
  
%

\medskip\par\noindent
{\bf  Notation:}
 For the remainder of the text denote $\xb$ for $(x,y,z) \in \realsthree_+$ or $(x,y) \in \Omega \subset \partial\realsthree_+$, as dictated by context. Norms $||\cdot||$ are taken to be $L_2(D)$ for the domain $D$. The symbol $\nu$ will be used to denote the unit normal vector to a given domain, again, dictated by context. Inner products in $L_2(\realsthree_+)$ are written $(\cdot,\cdot)$, while inner products in $L_2(\R^2\equiv\pd\R^3_+)$ are written $\langle\cdot,\cdot\rangle$. Also, $ H^s(D)$ will denote the Sobolev space of order $s$, defined on a domain $D$, and $H^s_0(D)$ denotes the closure of $C_0^{\infty}(D)$ in the $H^s(D)$ norm denoted by $\|\cdot\|_{H^s(D)}$ or $\|\cdot\|_{s,D}$.  We make use of the standard notation for the boundary trace of functions, e.g., for $\phi \in H^1(\realsthree_+)$, $ \gamma (\phi) \equiv tr[\phi]=\phi \big|_{z=0}$ is the trace of $\phi$ on the plane $\{\xb:z=0\}$.  

\section{PDE Description of the Gas-Plate Model}
%
%
 To describe the behavior of the gas (inviscid fluid), the theory of potential flows \cite{bolotin,dowellnon,dowell1} is utilized. The dynamics of the plate are governed by plate equations with the  von Karman (vK) nonlinearity \cite{springer,ciarlet}.
  This model is appropriate for plate dynamics with ``large" displacements,  and therefore applicable for the flexible structures of interest \cite{bolotin,ciarlet,dowellnon,dowell1,lagnese}.

The environment considered is $\realsthree_+ = \{(x,y,z) \in\realsthree\, :\, z\ge 0\}$. The thin plate has negligible thickness in the $z$-direction. The unperturbed flow field moves in the $x$-direction at the fixed velocity $U$. The physical constants have been normalized (corresponding to the linear potential flow) such that $U=1$ corresponds to Mach 1.  The plate
 is modeled by  a bounded domain $\Omega \subset \partial \realsthree_+$ with smooth boundary $\partial \Omega = \Gamma$. 
  
 The scalar function $u: \Om\times \R_+ \to \reals$ represents the displacement of the plate in the $z$-direction at the point $(x,y)$ at the moment $t$. The plate is taken with general boundary conditions $BC(u)$ which will be specified later. The dynamics are thus given by:
\begin{equation}\label{plate*}\begin{cases}
       (1-\alpha\Delta)u_{tt}+\Delta_{x,y} ^2 u+D_0(u_t)+f(u)=p(\xb,t) ~\text{in}~\Omega\\
       BC(u) ~ \text{on} ~\partial \Omega,
  \end{cases} \end{equation}
with appropriate initial data $u_0,~u_1$. 
The quantity $p(\xb,t)$ corresponds to the aerodynamic pressure of the flow on the plate and, in this configuration, is given in terms of the flow: \begin{equation}\label{coupling}\ds p(\xb,t)=p_0(\xb)+(\partial_t +U\partial_x)tr[\phi],~~\xb\in \Omega.\end{equation}   The quantity $p_0$ represents static pressure applied on the surface of the plate. The function $D_0$ represents an abstract form of interior damping. A parameter
$\alpha \ge 0$ represents rotational inertia in the filaments of the plate and is proportional to the square of the thickness of the plate. The case of interest, $\alpha =0$, is said to be {\em non-rotational}; the mathematical effects of $\alpha$ are discussed below.
The vK nonlinearity is given by:  \newline $$\ds f(u)=-[u, v(u)+F_0],
$$ where $F_0$ is a given {\em in-plane load} of sufficient regularity. 
      The vK bracket  is given by
$$
\ds
[u,w] = u_{xx}w_{yy}+
u_{yy}w_{xx} -
2 u_{xy}w_{xy},
$$ 
and
the Airy stress function $v(u_1,u_2) $ solves the  elliptic
problem
\begin{equation*}
\Delta^2 v(u_1,u_2)+[u_1,u_2] =0 ~~{\rm in}~~  \Omega,\quad \Dn v(u_1,u_2) = v(u_1,u_2) =0 ~~{\rm on}~~  \Gamma,
\end{equation*}
 (the notation $v(u)=v(u,u)$ is employed).
 Here and in In what follows $\nu$ (resp., $\tau$) denotes the unit outer normal (resp., tangential)  to $\Omega $.
 \par 
 The following types of boundary conditions for the displacement $u$ are typical for flow-structure interaction models.
 \begin{itemize}
\item[(C)] {\it  Clamped boundary conditions} (corresponding to a panel element in the flow-structure interaction) take the form 
\begin{equation*} u = \Dn u = 0 ~\text{ on } \Gamma. \end{equation*} 
In this case, the plate is  considered to be embedded  in or affixed to  a large rigid body.
\item[(FC)]  Let the boundary be partitioned in two pieces: $\Gamma_0$ and $\Gamma_1$. {\it  Free-clamped boundary conditions} (possibly corresponding to a flap, flag, or cantilevered airfoil) are given by
 \begin{equation*}
\begin{cases} u=\partial_{\nu}=0~~\mathrm{on}~~ \Gamma_0\\ \Delta u + (1-\mu) B_1 u = D_1(\Dn u_t)~~\mathrm{on}~~ \Gamma_1,
 \\
\partial_{\nu} \Delta u + (1-\mu) B_2 u - \mu_1 u = D_2(u,u_t)  + \alpha \Dn u_{tt} ~~\mathrm{on}~~ \Gamma_1, \end{cases} \end{equation*}
 where the boundary operators $B_1$ and $B_2$
are given by \cite{lagnese}:
\begin{equation}
\begin{array}{c}
B_1u = 2 \nu_1\nu_2 u_{xy} - \nu_1^2 u_{yy} - \nu_2^2 u_{xx}=-\partial_{\tau}^2u-\nabla \cdot \nu(\xb)\Dn u\;, \\
\\ 
B_2u = \partial_{\tau} \left[ \left( \nu_1^2 -\nu_2^2\right) u_{xy} + \nu_1
\nu_2 \left( u_{yy} - u_{xx}\right)\right]\,=\partial_{\tau}\partial_{\nu}\partial_{\tau}u;%
\end{array}
\end{equation}
  The parameter $\mu_1$ is nonnegative and $0<\mu<1$ is the Poisson modulus. The operators $D_i$ for $i=1,2$ are possible energy damping/dissipation functions. 
\end{itemize}
 The scalar function $\phi: \realsthree_+\times \reals \to \reals$ is the linear {\em flow potential} and satisfies:
\begin{equation} \label{flow}\begin{cases}
     (\partial_t+U\partial_x)^2\phi=\Delta_{x,y,z} \phi ~\text{in} ~\realsthree_+\\
    FC(\phi),
        \end{cases} \end{equation}
       with appropriate initial data $\phi_0$ and $\phi_1$\footnote{Note that, as mentioned above, $U$ is normalized by the speed of sound and thus is what engineers refer to as the {\em Mach number.}}. The term $FC(\phi)$ represents the {\em flow boundary conditions} or {\em interface conditions}. We will consider two primary flow conditions:
        \begin{itemize}
        \item[(NC)] The standard flow boundary conditions---the {\em full Neumann condition}---are henceforth denoted (NC). This is typically utilized when a majority of the plate boundary is clamped or hinged. It takes the form:
        \begin{equation}\label{NC}
        \partial_z\phi \big|_{z=0} = (\partial_t+U\partial_x)u_{\text{ext}},~~\text{on }~ \mathbb{R}^2=\partial \mathbb{R}^3_+,
        \end{equation}
         where the subscript `ext' indicates the extension by zero for functions defined on $\Omega$ to all of $\mathbb R^2$. 
        \item[(KJC)] The {\em Kutta-Joukowsky} flow condition, henceforth denoted (KJC), is a mixed (Zaremba-type \cite{savare}) condition which will be discussed at length below.  We introduce the {\em acceleration potential} $\psi = (\partial_t+U\partial_x)\phi$ for the flow potential $\phi$. Then the  {\it Kutta-Joukowsky} flow condition on $\realstwo$ is
        \begin{equation}\label{KJC}
        \partial_z \phi = (\partial_t+U\partial_x)u ~ \text{ on } ~\Omega;~~ \psi = 0  \text{ on } ~ \realstwo \setminus \Omega.
        \end{equation}
        \end{itemize}
   In both cases above the interface coupling on the surface of the plate occurs in a Neumann type boundary condition known as the {\em downwash} of the flow. 
   The choice of flow conditions itself dependent upon the plate boundary conditions, and this is determined by application. 
 \section{Standard Panel  Model}
 Our primary interest here is the following PDE system\footnote{For definiteness and some simplification we concentrate on the clamped boundary conditions 
for the displacement $u$.}
\begin{equation}\label{flowplate}\begin{cases}
(1-\alpha\Delta)u_{tt}+\Delta^2u+ku_t +f(u)= p_0+\big(\partial_t+U\partial_x\big)tr[\phi] & \text { in } \Omega\times (0,T),\\
u(0)=u_0;~~u_t(0)=u_1,\\
u=\Dn u = 0 & \text{ on } \partial\Omega\times (0,T),\\
(\partial_t+U\partial_x)^2\phi=\Delta \phi & \text { in } \realsthree_+ \times (0,T),\\
\phi(0)=\phi_0;~~\phi_t(0)=\phi_1 & \text { in } \realsthree_+\\
\Dn \phi = -\big[(\partial_t+U\partial_x)u (\xb)\big]_{\text{ext}}& \text{ on } \realstwo_{\{(x,y)\}} \times (0,T).
\end{cases}
\end{equation}
 In the discussion below, we will encounter strong (classical), generalized (mild), and weak (variational) solutions.
In the results below, semigroup theory is utilized, which requires the use of \textit{generalized} solutions; these are strong limits of strong solutions. These solutions satisfy an integral formulation of (\ref{flowplate}), and are called \textit{mild} by some authors. We note that generalized solutions are also weak solutions, see, e.g., \cite[Section 6.5.5]{springer} and \cite{webster}. For the exact definitions of 
 strong and generalized  solutions
we refer  to \cite{springer,jadea12,supersonic,webster}.  Finite energy solutions are identified with mild solutions.

\subsection{Well-Posedness of Nonlinear Panel Model}\label{wellp}

Well-posedness results in the past literature deal mainly with the dynamics possessing some regularizing effects. This has been accomplished by either accounting for non-negligible rotational inertia \cite{b-c,b-c-1,springer} and strong damping of the form $-\alpha \Delta u_t$, or by incorporating helpful thermal effects  into the structural model \cite{ryz,ryz2}.  In the cases listed above, the natural structure of the dynamics dictate that the plate velocity has the property $u_t \in H^1(\Omega)$,
 which provides  the needed regularity 
for the applicability of many standard tools in nonlinear analysis. One is still faced with the low regularity of boundary traces,
due to the failure of the Lopatinski conditions \cite{sakamoto}.
In fact, the first contribution to the mathematical analysis of the problem  is \cite{LBC96,b-c-1} (see also \cite[Section 6.6]{springer}), where the case  $\alpha > 0$ (rotational) is fully treated.  The method employed in \cite{LBC96,b-c-1,springer}
relies on sharp microlocal estimates for the solution to the wave equation driven by $H^{1/2}(\Omega) $ Neumann  boundary data given by~ $u_t + U u_x$. This gives~ $\phi_t|_{\Omega} \in
L_2(0,T; H^{-1/2}(\Omega))$  \cite{miyatake}.  Along with an explicit solver for the three dimensional flow equation (the Kirchhoff formula), and a Galerkin approximation for the structure, one may construct  a fixed point for the appropriate
 solution map. Such a method  works well for all values of $U$. 
%
%
When $\alpha =0$, and additional smoothing is not present, the corresponding estimates become singular, destroying the  applicability of the  previous methods.  The  main  mathematical difficulty of this problem is  the presence of  the   boundary  terms:  $ (\phi_t +
 U \phi_x)|_{\Omega} $  ~acting as the aerodynamic pressure on the plate. When $U =0$, the corresponding  boundary terms exhibit monotone behavior with respect to   the natural energy  inner product (see \cite[Section 6.2]{springer} and \cite{cbms}) which is topologically equivalent  to  the topology of the state space $Y$.   The latter  enables  the use of monotone operator theory \cite{springer} (and references therein).  However, when
  $U > 0$ this is no longer true  with respect to the  topology induced by the energy spaces.  
%

We begin with  an overview of the well-posedness results for the flow-plate model which covers both subsonic and supersonic cases.

	\begin{theorem}\label{th:supersonic}
Consider  problem in (\ref{flowplate}) with $U\ne1$, $p \in L_2(\Omega), F_0 \in H^{3 +\delta}(\Omega)$.  Let $ T > 0 $ and
\begin{equation}\label{space-Y}
( \phi_0, \phi_1; u_0, u_1 ) \in  
Y = Y_{fl}\times Y_{pl} \equiv \big(H^1(\realsthree_+) \times L_2(\realsthree_+)\big)\times\big( H^2_0(\Omega) \times L^{\alpha}_2(\Omega)\big)
\end{equation}
 where 
 $L_2^{\alpha} (\Omega) =  
 H^1_0(\Omega)$ if $\alpha>0$ and  $L_2^{\alpha} (\Omega)=L_2(\Omega)$ when $\alpha =0$.
Then there exists unique generalized solution
\begin{equation}\label{phi-u0reg}
(\phi (t), \phi_t(t); u(t), u_t(t)) \in C([0, T ],  Y).
\end{equation}
This solution is also weak and generates
  a  nonlinear  continuous  semigroup
$S_t : Y \rightarrow Y$.
\par
 For any initial data in
  \begin{equation}\label{Y1}
Y_1 \equiv \left\{ y= (\phi,\phi_1;u,u_1)  \in Y\;   \left| \begin{array}{l}
\phi_1 \in H^1(\R^3_{+} ),~~ u_1 \in \cH,  ~\\ -U^2 \Dx^2 \phi  + \Delta \phi \in L_2(\R^3_{+}), \\
\Dn  \phi = - [u_1 +U \Dx u ]\cdot {\bf 1}_{\Omega} \in H^1(\R^2), \\
~
-\Delta^2 u + Utr[ \Dx \phi] \in [L_2^\alpha(\Omega)]^* \end{array} \right. \right\}
\end{equation}
   the corresponding  solution is also strong. Here
   $[L_2^\alpha(\Omega)]^*$ is  
$ H^{-1}(\Omega)$ if $\alpha>0$ and  $L_2(\Omega)$ when $\alpha =0$.

\end{theorem}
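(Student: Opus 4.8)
The plan is to recast (\ref{flowplate}) as an abstract Cauchy problem $y_t=\mathbb{A}y+\mathbb{F}(y)$ on $Y$, with $y=(\phi,\phi_t;u,u_t)$: here $\mathbb{A}$ encodes the linear flow and plate dynamics \emph{together with} the pressure coupling through the boundary traces, while $\mathbb{F}$ collects the von Karman term $f(u)=-[u,v(u)+F_0]$ and the static loads $p_0,F_0$. The nonlinearity is the easy part and is dealt with uniformly: on bounded sets of $H^2_0(\Om)$ the Airy map $v(\cdot)$ is locally Lipschitz into $H^{2+\epsilon}(\Om)\cap W^{2,\infty}(\Om)$, so $\mathbb{F}:Y\to Y$ is locally Lipschitz (this is exactly where $F_0\in H^{3+\delta}(\Om)$ is used, guaranteeing $[\,\cdot\,,F_0]:H^2_0(\Om)\to L_2(\Om)$ is bounded and locally Lipschitz). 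Once the linear semigroup is constructed, local generalized (mild) solutions follow by a contraction mapping; that generalized solutions are also weak, and that the semigroup property $S_tS_\tau=S_{t+\tau}$ holds, follows by passing to the limit from strong solutions. The rotational case $\al>0$ is already covered by the microlocal method recalled above, so the real work is $\al=0$.

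\textbf{Subsonic regime $U<1$.} I would renorm $Y_{fl}$ by the coercive flow energy $\|\g\phi\|^2-U^2\|\Dx\phi\|^2+\|\phi_t\|^2$ (equivalent to the $Y_{fl}$ norm precisely because $U<1$), after a bounded shift of the state variable that symmetrizes the interface contribution. With respect to this topology $\mathbb{A}$ splits as an $m$-dissipative operator plus a bounded (indeed compact) perturbation produced by the $U\Dx$ drift and the antisymmetric part of the coupling; Lumer--Phillips together with the bounded-perturbation theorem yields $e^{\mathbb{A}t}$. Global existence on every $[0,T]$ then comes from the energy identity for the coupled system: the total energy $\mathcal{E}=E_{pl}+E_{fl}$ (with the appropriate interface correction) is non-increasing up to lower-order terms absorbed by Gronwall, and since the von Karman potential satisfies $\Pi(u)\ge -c-\eta\|\De u\|^2$ with $\eta<1$, the norm $\|S_ty\|_Y$ stays finite on bounded time intervals.

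\textbf{Supersonic regime $U>1$.} Here the flow energy is indefinite and $\mathbb{A}$ is \emph{not} a bounded perturbation of a dissipative generator on $Y$, so the direct approach breaks down. Instead I would solve the flow equation (\ref{flow}) with the prescribed Neumann datum $-[(\Dt+U\Dx)u]_{\text{ext}}$ by the explicit Kirchhoff/Huygens formula for the $3$D wave operator and exploit finite speed of propagation: for $t$ beyond a threshold $t_{\#}$ depending only on $\mathrm{diam}\,\Om$, the trace $tr[\phi]$ on $\Om$, and hence the pressure $(\Dt+U\Dx)tr[\phi]$, can be written as $-(u_t+Uu_x)$ plus a term depending only on the history $\{u(s):t-t_{\#}\le s\le t\}$ in a fixed neighbourhood of $\Om$, which is of lower order with respect to the energy topology. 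Substituting, the plate equation becomes a damped von Karman plate with a delayed, compactly supported forcing, whose well-posedness follows from the von Karman plate semigroup theory plus an interval-by-interval fixed point in the delay variable; the initial window $[0,t_{\#}]$ is handled by a separate fixed-point argument coupling the half-space wave solver with the plate solver. Gluing the two regimes gives the unique generalized solution, continuous dependence and the semigroup property, and the finite-$T$ a priori bound again follows from the energy relation, now combined with the delay structure (a Barbalat/Gronwall argument absorbing the non-conservative drift term).

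\textbf{Strong solutions and the main obstacle.} For data in $Y_1$ one verifies that the listed conditions are exactly the compatibility and range requirements making $\mathbb{A}y+\mathbb{F}(y)\in Y$ (the nonlinear lower-order terms preserve the relevant regularity by the mapping properties of $f$), so by semigroup regularity theory the generalized solution is in fact strong. I expect the principal difficulty to lie in the supersonic reduction: establishing the delayed-potential representation with the correct structural decomposition---isolating the damping-like $-u_t$ contribution and proving the remainder is genuinely compact/lower order---and then securing the finite-time a priori bound in the total absence of dissipativity, where the $U\Dx$ drift must be tamed through the delay structure rather than by the energy. The subsonic construction of the equivalent coercive norm, and the check that the antisymmetric coupling is only a bounded perturbation, is the secondary technical point.
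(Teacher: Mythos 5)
Your subsonic paragraph is essentially the paper's route (renormalization of the state space to get shifted dissipativity, Lumer--Phillips plus perturbation, the sharp Airy regularity $\|v(u)\|_{W^{2,\infty}}\le C\|u\|^2_{H^2}$ to make the nonlinearity locally Lipschitz, and the energy identity plus control of low frequencies for the global bound). The supersonic paragraph, however, contains a genuine gap: the Kirchhoff-formula/fixed-point scheme you propose is precisely the method that the theory identifies as \emph{failing} when $\alpha=0$. That scheme hinges on the microlocal estimate giving $\phi_t|_\Omega\in L_2(0,T;H^{-1/2}(\Omega))$ for Neumann data $u_t+Uu_x$ in $H^{1/2}(\Omega)$, which requires $u_t\in H^1(\Omega)$, i.e.\ rotational inertia $\alpha>0$ (or strong damping). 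For $\alpha=0$ the velocity is only $L_2(\Omega)$, the trace estimates become singular, and the fixed point does not close. Moreover, the delayed-potential representation you invoke (i) presupposes compactly supported flow data, which Theorem~\ref{th:supersonic} does not assume, and (ii) produces a history term $q^u$ that sits at the \emph{critical} level of regularity, not at lower order, so it cannot be absorbed as a compact remainder the way you describe.

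The actual supersonic argument is a semigroup construction in the spirit of your subsonic one, but after the change of flow state to the aeroelastic potential $\psi=\phi_t+U\phi_x$: the adapted energy $\widehat E_{fl}=\tfrac12(\|\psi\|^2+\|\nabla\phi\|^2)$ is positive even for $U>1$, and the generator in the variables $(\phi,\psi;u,u_t)$ is again an ($\omega$-shifted) maximal dissipative operator plus a perturbation. The price is that the perturbation now contains the a priori ill-defined interface term $\langle u_x,\psi\rangle_\Omega$ in the energy balance \eqref{energyrelationkj}; controlling it requires a \emph{hidden trace regularity} result for $\psi$, proved microlocally (a loss of only $1/2$ derivative in the characteristic region), not finite speed of propagation. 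Finally, for strong solutions your appeal to ``semigroup regularity theory'' on $Y_1$ is too quick in the supersonic case: elements of $Y_1$ lose a tangential derivative in $x$, strong solutions are not regular enough to justify the energy computations directly, and one must run a separate regularization procedure in which strong solutions are approximated by smoother functions that are not themselves solutions of the PDE.
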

	
	In the subsonic case, there is no spatial ``degeneracy" in the flow equation, leading to a stronger well-posedness result:
\begin{theorem}\label{th:subsonic} Suppose $U<1$, $p \in L_2(\Omega)$ and $F_0 \in H^{3+\delta}(\Omega)$.

\item \textbf{Generalized and Weak Solutions:} Assume 
that initial data sytisfy \eqref{space-Y}. Then \eqref{flowplate} has a unique generalized solution on any interval $[0,T]$. 
 Every generalized solution is also weak.
Moreover these solutions to \eqref{flowplate} generates
a  dynamical system $(S_t,Y)$. 
Any generalized (and hence weak) solution to \eqref{flowplate},  satisfies the bound \begin{equation}\label{energybound}
\sup_{t \ge 0} \left\{((1-\alpha \Delta)u_t, u_t)_{0,\Omega}^2+\|\Delta u\|_{\Omega}^2+\|\phi_t\|_{\realsthree_+}^2+\|\nabla \phi\|_{\realsthree_+}^2 \right\}  < + \infty.\end{equation}

 \noindent \textbf{Strong Solutions:} Assume $u_1 \in H^2_0(\Omega), ~u_0\in W \equiv\{w\in H_0^2(\Omega) : \Delta ^2 w \in L_2^\alpha(\Omega)\}.$ Also, denote $\mathcal H \equiv H_0^2(\Omega) \times L_2^{\alpha}(\Omega)$. Moroever, suppose $\phi_0 \in H^2(\realsthree_+)~\text{ and }~\phi_1 \in H^1(\realsthree_+)$, with the following compatibility condition in place: \begin{equation}\label{compatibilitycondition}
\partial_z \phi_0\big|_{z=0} = \begin{cases}u_1+U\partial_x u_0 & \text{ if } \xb \in \Omega\\ 0 & \text{ if } \xb \notin \Omega \end{cases}.
\end{equation} 
Then \eqref{flowplate} has a unique strong solution for any interval $[0,T]$. This solution possesses the properties \begin{equation*} (\phi;\phi_t;\phi_{tt}) \in L_{\infty}(0,T;H^2(\realsthree_+)\times H^1(\realsthree_+)\times L_2(\realsthree_+)),
~(u;u_t;u_{tt})\in L_{\infty}(0,T;W \times \mathcal H),
\end{equation*} and satisfies the energy identity 
\begin{equation}\label{ener-rel-subs}
{\mathcal{E}}(t)+  \int_s^t \int_{\Omega} k u_t^2={\mathcal{E}}(s),~~ t\ge s,
\end{equation}
where $\mathcal E(t) = E_{pl}(t)+E_{fl}(t)+E_{int}(t)$,
with:
\begin{align}
E_{pl} = & ~\dfrac{1}{2}[((1-\alpha \Delta)u_t, u_t)_{0,\Omega}^2+ ||\Delta u ||_{0,\Omega}^2+\frac{1}{2}||\Delta v(u)||_{0,\Omega}^2]-\langle F_0,[u,u]\rangle_{\Omega}+\langle p,u\rangle_{\Omega} \\
E_{fl}=  &~\dfrac{1}{2}[||\phi_t||^2_{0,\realsthree_+}+||\nabla \phi||_{0,\realsthree_+}^2-U^2||\phi_x||^2_{0,\realsthree_+}],~~E_{int} =  ~2U\langle\phi,u_x\rangle_{\Omega}.
\end{align}
\end{theorem}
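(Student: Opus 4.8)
The plan is to recast \eqref{flowplate} as a nonlinearly perturbed abstract Cauchy problem on $Y$, to show the linear part generates a strongly continuous semigroup, to treat the von Karman term $f(u)=-[u,v(u)+F_0]$ as a locally Lipschitz perturbation, and then to promote local solutions to global ones by an a priori energy bound available precisely because $U<1$. Write $y=(\phi,\phi_t;u,u_t)$, so \eqref{flowplate} reads $\dot y=\bA y+\cF(y)$, where $\bA$ encodes the flow operator $(\Dt+U\Dx)^2\phi-\De\phi$ with the downwash interface condition $\Dn\phi=-[u_t+U\Dx u]_{\text{ext}}$, the damped linear plate operator $(1-\al\De)u_{tt}+\De^2u+ku_t$, and the linear coupling load $(\Dt+U\Dx)tr[\phi]$, while $\cF(y)=(0,0;0,(1-\al\De)^{-1}(p_0-f(u)))$ carries the nonlinearity and the static load. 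The relevant quadratic form on $Y$ is $\cE$ with the von Karman and load terms dropped; it contains the flow block $\|\g\phi\|^2-U^2\|\phi_x\|^2$ and the cross term $2U\langle tr[\phi],u_x\rangle_{\Om}$. The first --- and technically decisive --- step is to show that for $U<1$ this form, up to an additive constant and after replacing the $L_2(\realsthree_+)$-part of $\phi$ by $\|\g\phi\|$, controls the $Y$-seminorm from below: here one uses the coercivity $\|\g\phi\|^2-U^2\|\phi_x\|^2\ge(1-U^2)\|\g\phi\|^2$, the Sobolev trace estimate bounding $\phi\mapsto tr[\phi]$ from the homogeneous $H^1(\realsthree_+)$ into $L_4(\realstwo)$ (hence into $L_2(\Om)$), Poincar\'e's inequality on $H^2_0(\Om)$, and the standard lower bound for the von Karman potential in terms of the Airy stress function $v(u)\in H^{3+\epsilon}(\Om)\cap H^2_0(\Om)$ \cite{springer,ciarlet}. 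The full $\|\phi\|_{L_2(\realsthree_+)}$ is not controlled by $\cE$ (the flow equation is insensitive to additive constants in $\phi$), but it grows at most linearly on bounded time intervals from $\|\g\phi\|+\|\phi_t\|$, which suffices for global solvability in $C([0,T],Y)$ and is consistent with the seminorm form of \eqref{energybound}.

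Next I would show that, in an appropriate inner product equivalent to the $Y$-norm, $\bA-\omega I$ is dissipative for some $\omega\ge0$: testing the flow equation with $\phi_t$ and the plate equation with $u_t$ and integrating by parts, the boundary pairing generated by $\De\phi\cdot\phi_t$ is matched by the coupling term $(\Dt+U\Dx)tr[\phi]$ in the plate equation, the skew term $2U\Dx\Dt\phi$ is conservative, and the only surviving definite contribution is $-\|k^{1/2}u_t\|^2\le0$ (the shift $\omega$ absorbs the indefinite remainder from the weighting). Maximality, i.e.\ solvability of $(\lambda I-\bA)y=g$ for large $\lambda$, reduces to a coercive elliptic problem for $\phi$ on $\realsthree_+$ with Neumann data --- coercive because $U<1$ and the first-order-in-$x$ term is antisymmetric --- coupled to a fourth-order elliptic problem for $u$ on $\Om$, to which Lax--Milgram applies. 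By Lumer--Phillips, $\bA$ generates a $C_0$-semigroup on $Y$. Using the sharp regularity theory for the von Karman bracket \cite{springer} together with $F_0\in H^{3+\delta}(\Om)$, the map $u\mapsto f(u)$ is locally Lipschitz from $H^2_0(\Om)$ into $L_2(\Om)$, so $\cF:Y\to Y$ is locally Lipschitz; a variation-of-parameters fixed point then yields a unique local generalized solution $y\in C([0,T_{\max}),Y)$ with continuous dependence on data, producing the nonlinear continuous flow. That generalized solutions are also weak follows by testing the integral formulation against smooth functions and integrating by parts \cite[Sec.~6.5.5]{springer} and \cite{webster}.

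For the strong-solution assertion one checks that the stated hypotheses on $(\phi_0,\phi_1;u_0,u_1)$ together with the compatibility condition \eqref{compatibilitycondition} place the data in $D(\bA)$, and that $\cF$ maps $D(\bA)$ Lipschitz-continuously into $D(\bA)$ on bounded sets (again via the bracket regularity), so the generalized solution is strong, $y\in C([0,T],D(\bA))$, with the claimed $L_\infty$ regularity of $(\phi,\phi_t,\phi_{tt})$ and $(u,u_t,u_{tt})$. The energy identity \eqref{ener-rel-subs} is then obtained by the multiplier method --- multiply the plate equation by $u_t$ and the flow equation by $\phi_t$, integrate over $\Om$ and $\realsthree_+$ respectively, add, and integrate by parts, every step legitimate at the $D(\bA)$ level --- upon which the interface terms reorganize, together with the flow contribution $-U^2\|\phi_x\|^2$, into $\tfrac{d}{dt}E_{int}$, leaving $\tfrac{d}{dt}\cE=-\int_\Om ku_t^2$. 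Since this shows $t\mapsto\cE(t)$ is non-increasing along strong solutions while $\cE$ is bounded below by $c(\text{$Y$-seminorm})^2-M$ by the first step, strong solutions satisfy \eqref{energybound}; approximating arbitrary $Y$-data by strong data and invoking continuous dependence extends \eqref{energybound} to all generalized (hence weak) solutions, which in particular rules out finite-time blow-up, so $T_{\max}=+\infty$ and $(S_t,Y)$ is a genuine dynamical system.

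\textbf{Main obstacle.} The heart of the matter is the low regularity of the aerodynamic pressure $tr[(\Dt+U\Dx)\phi]\big|_{\Om}$, which a priori lives only in a negative-order Sobolev space because the Lopatinski conditions fail; for $\al=0$ it cannot be absorbed by a naive perturbation, and the subsonic hypothesis $U<1$ is exactly what restores coercivity of the flow energy and thereby the lower bound on $\cE$ that drives every step above. Without it --- in the sonic or supersonic regime --- the flow quadratic form is indefinite and one must first symmetrize the flow operator by a change of variables before running the semigroup argument, which is the route behind Theorem~\ref{th:supersonic}. A secondary difficulty is that the energy identity is only directly justified for strong solutions, so the strong-solution theory has to be built in tandem with the generalized one and the bounds transferred by density --- a transfer that itself relies on the uniform control of $E_{int}$ supplied by the subsonic trace estimates.
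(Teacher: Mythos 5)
Your proposal is correct in outline and follows essentially the same route as the paper, which itself only sketches the argument by naming its three ingredients from \cite{springer,webster}: your equivalent ("renormalized") inner product with shifted dissipativity and Lumer--Phillips is ingredient (1), your use of the sharp Airy-stress regularity \eqref{airy} to make $f$ locally Lipschitz from $H^2_0(\Omega)$ to $L_2(\Omega)$ is ingredient (2), and your lower bound on $\mathcal{E}$ (trace estimate for $2U\langle\phi,u_x\rangle_\Omega$, subsonic coercivity of $E_{fl}$, and absorption of low frequencies by the von Karman potential) is ingredient (3). The only caveat is that the hardest technical point for $\alpha=0$ --- making rigorous sense of the pairing of $tr[\phi_t+U\phi_x]$ with $u_t\in L_2(\Omega)$ in the definition of the generator and in the energy identity --- is acknowledged but not resolved in your sketch; the cited sources handle it via the domain description of $\bA$ together with a density/regularization (or viscosity) argument.
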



The proof of Theorem \ref{th:subsonic} given in 
\cite{springer} ans
\cite{webster} relies on three  principal ingredients: (1) renormalization of the state space which yields shifted dissipativity for the dynamics operator (which is nondissipative in the standard norm on the state space); (2) the sharp regularity of Airy's stress function, which converts a supercritical nonlinearity into a critical one  \cite{springer} (and references therein): 
\begin{equation}\label{airy}
||v(u)||_{W^{2,\infty}(\Omega) } \leq C ||u||^2_{H^2(\Omega)};
\end{equation}
and (3) control of low frequencies for the system by the nonlinear source, represented by the inequality
\begin{equation}
||u|^2_{L_2(\Omega)} \leq \epsilon [ ||u||^2_{H^2(\Omega)^2} + ||\Delta v(u)||^2_{L_2(\Omega) } ] + C_{\epsilon} .
\end{equation}
An alternative proof based on a {\em viscosity method} can be found in \cite{jadea12}. 
\medskip\par

  In comparing the results obtained for supersonic and subsonic cases,  there are two major differences at the qualitative level, in addition to an obvious fact that $E_{fl}$ is no longer positive when $U > 1$:
{\it First}, the regularity of strong solutions obtained in the subsonic case \cite{jadea12,webster} coincides with regularity expected for classical solutions. In the supersonic case, there is a  loss of differentiability in the flow  in the tangential  $x$ direction, which then propagates to the  loss of differentiability in the structural variable $u$. As a consequence strong solutions do not exhibit sufficient regularity in order to perform the  needed calculations.
  To cope with the problem, special regularization procedure was introduced in \cite{supersonic},
  where  strong solutions are approximated by sufficiently regular functions, though not solutions to the given PDE.
  The limit passage allows one to obtain the needed estimates valid for the original solutions \cite{supersonic}.
The analysis of the supersonic case makes use of a change of state variable: the dynamic variable $\psi = \phi_t+U\phi_x$ is considered as the second flow state. In this case the adapted energies are:
\begin{align}
\widehat E_{fl}= ~ \dfrac{1}{2}[||\psi||^2_{0,\realsthree_+}+||\nabla \phi||_{0,\realsthree_+}^2],~~
\widehat {\mathcal E} =  ~E_{pl}+\widehat E_{fl},
\end{align}
and the energy identity is given by:
 \begin{equation} \label{energyrelationkj}
\widehat{\mathcal E}(t) + \int_s^t \int_{\Omega} k u_t^2 +\int_0^t\langle u_x,\psi\rangle_{\Omega} d\tau = \widehat {\mathcal E}(0).
\end{equation}

{\it Secondly}, in the subsonic case one shows that the solutions are {\it bounded} in time,
  see \cite[Proposition 6.5.7]{springer} and also \cite{jadea12,webster}.
  This property  cannot be shown in the supersonic analysis.  The leak of the energy in energy relation can not be compensated for by the nonlinear terms (unlike in the subsonic case). 

\subsection{Long-time Behavior and Attracting Sets}
 It is known \cite{dowell1} (and references therein) that, although the flow coupling may introduce potential instability (flutter), it can also help bound the plate dynamics and may assist in the dissipation of plate energy associated to high frequencies \cite{C,C1}.\footnote{Though, there are also calculations performed at the physical level which demonstrate that the effect of the flow can be destabilizing in certain regimes---see Remark \ref{piston} below.} The work in \cite{delay} is an attempt to address these observations rigorously, via the reduction result in Theorem \ref{rewrite} below. This result is implemented and the problem is cast within the realm of PDE with delay (and hence our state space for the plate contains a delay component which encapsulates the flow contribution to the dynamics). 
One would like to show that the model described above is stable in the sense that trajectories converge asymptotically to a ``nice" set. Unlike parabolic dynamics, there is no a priori reason to expect that hyperbolic-like dynamics can be asymptotically reduced to truly finite dimensional dynamics. By showing that the PDE dynamics converges to a finite dimensional (compact) \textit{attractor} it effectively allows the reduction of the analysis of the infinite dimensional, hyperbolic-like, unstable model (asymptotically in time) to a finite dimensional set upon which classical control theory can be applied.
\begin{remark}
Based on numerical results, it is well known that for a panel whose width is much larger than its chord, that four to six structural modes are sufficient to give very good accuracy to predict flutter and limit cycle oscillations. Indeed, for transonic and subsonic flows even one or two modes may be sufficient. As the width of the panel is decreased more and more modes are required however. Also if the plate is under tension, then the number of modes needed increases, and in the singular limit of infinite tension the number of modes required also goes to infinity \cite{B,dowellA}.\end{remark}

The main  point  of the treatment in \cite{delay} is to demonstrate the existence and finite-dimensiona\-lity of a global attractor for the reduced plate dynamics (see Theorem \ref{rewrite}) {\em in the absence of any imposed damping mechanism} and in the absence of any regularizing mechanism generated by the dynamics itself.
Imposing no mechanical damping, taking $\alpha =0$, and assuming the flow data are compactly supported yields the primary theorem in \cite{delay}:
\begin{theorem}\label{th:main2}
Suppose $0\le U \ne 1$, $F_0 \in H^{3+\delta}(\Omega)$ and $p_0 \in L_2(\Omega)$.
 Then there exists a compact set $\mathscr{U} \subset Y_{pl}$ of finite fractal dimension such that $$\displaystyle \lim_{t\to\infty} d_{Y_{pl}} \big( (u(t),u_t(t)),\mathscr U\big)
 \displaystyle=\lim_{t \to \infty}\inf_{(\nu_0,\nu_1) \in \mathscr U} \big( ||u(t)-\nu_0||_2^2+||u_t(t)-\nu_1||^2\big)=0$$
for any weak solution $(u,u_t;\phi,\phi_t)$ to \eqref{flowplate} with
initial data
$$
(u_0, u_1;\phi_0,\phi_1) \in Y,~~Y\equiv H_0^2(\Omega)\times L_2(\Omega)\times H^1(\realsthree_+)\times L_2(\realsthree_+),
$$
which are
localized  in $\R_+^3$ (i.e., $\phi_0(\xb)=\phi_1(\xb)=0$ for $|\xb|>R$ for some $R>0$). Additionally,there is the extra regularity $\mathscr{U} \subset \big(H^4(\Omega)\cap H_0^2(\Omega)\big) \times H^2(\Omega)$.
\end{theorem}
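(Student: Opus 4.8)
The plan is to realize the flow-plate system as a dynamical system on the plate phase space alone, and then to apply the abstract theory of quasi-stable dynamical systems (in the sense of Chueshov–Lasiecka) to obtain a compact finite-dimensional attractor without any imposed damping. The first step is the \emph{flow reduction}: since the flow data are compactly supported in $\R^3_+$ and $U \ne 1$, one uses the explicit Kirchhoff-type representation of the solution to the flow equation (\ref{flow}) with Neumann data $-[u_t + Uu_x]_{\mathrm{ext}}$ to show that, after a finite time $t^\# = t^\#(R)$ depending on the support radius $R$ of $\phi_0,\phi_1$ and on the plate diameter, the flow contribution to the aerodynamic pressure $p(\xb,t)$ on $\Omega$ can be written as a \emph{delayed} functional of the plate displacement: roughly,
\begin{equation*}
\big(\partial_t + U\partial_x\big)tr[\phi]\big|_\Omega = -\big(\partial_t + U\partial_x\big)^2\!\!\int_0^{t^\#}\!\! \big(\text{kernel}\big)\ast u(t-s)\,ds + \text{(lower order)},
\end{equation*}
so that the coupled PDE reduces to a von Karman plate equation with a memory/delay term and a compact right-hand side. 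This is the content of the reduction result referenced as Theorem \ref{rewrite}; I would invoke it and record that the resulting delay-plate system generates a dynamical system on the extended space $Y_{pl}\times L_2(-t^\#,0;\,\cdot)$, with the plate component of any weak solution of (\ref{flowplate}) coinciding with a trajectory of the reduced system for $t \ge t^\#$.

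The second step is to establish \emph{dissipativity} of the reduced dynamics: I would construct a Lyapunov-type functional (a perturbation of the plate energy $E_{pl}$ augmented by the memory term) and show, using the energy identity (\ref{ener-rel-subs}) together with the structure of the delay term and the Airy-function estimate (\ref{airy}) and the low-frequency bound displayed after it, that all trajectories enter a bounded absorbing set in $Y_{pl}\times L_2$. The key mechanism here is that, although no damping $k$ is imposed and no rotational inertia smoothing is present ($\alpha = 0$), the flow itself induces an effective dissipation on the plate through the delayed feedback — this is precisely the physically expected phenomenon that the flow ``bleeds'' plate energy to infinity. Technically, one multiplies the plate equation by $u_t$ and by $u$, integrates by parts, and absorbs the bad boundary traces using the finite-time-horizon structure of the delay kernel; the energy relation (\ref{energyrelationkj}) in the $\psi$-variable is the relevant identity to exploit when $U>1$.

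The third and central step is \emph{asymptotic smoothness / quasi-stability}: one must show that the difference of two trajectories satisfies a stabilizability inequality of the form
\begin{equation*}
E^z(t) \le C\,e^{-\omega t} E^z(0) + C\,\sup_{[0,t]}\big(\text{compact seminorm of } z\big)^2,
\end{equation*}
where $z = u^1 - u^2$ is the difference of plate components. This is obtained by the usual flow-multiplier and plate-multiplier estimates, combined with the compactness coming from the smoothing in the delay kernel (the kernel maps $L_2$ of the plate history into higher Sobolev regularity because of the extra $\partial_t + U\partial_x$ applied to the retarded potential) and from the compactness of the von Karman nonlinearity $f$ as a map $H^2_0(\Omega)\to L_2(\Omega)$, again via (\ref{airy}). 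Once quasi-stability is in hand, the abstract theorems (e.g. Chueshov–Lasiecka) immediately yield: existence of a compact global attractor $\mathscr A$ for the reduced system, finiteness of its fractal dimension, and — by a bootstrap on the attractor using elliptic regularity for $\Delta^2$ and the smoothing of the delay term — the extra regularity $\mathscr A \subset (H^4\cap H^2_0)\times H^2$. Projecting $\mathscr A$ onto the plate coordinates gives the set $\mathscr U$, and the convergence statement follows because every weak solution's plate component is, after time $t^\#$, a full trajectory of the reduced system and hence attracted to $\mathscr A$.

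The hard part will be Step 3: proving the quasi-stability inequality in the \emph{absence of any mechanical damping} ($k$ may be $0$) and without rotational inertia ($\alpha = 0$). All dissipation must be extracted from the flow via the delay term, and the delay term is only a finite-memory convolution, so one cannot simply treat it as monotone damping — one needs a careful decomposition separating an exponentially stable ``principal'' part from a genuinely compact remainder, and in the supersonic regime $U>1$ one additionally has to contend with the loss of tangential $x$-regularity noted after Theorem \ref{th:subsonic}, which forces the regularization-by-approximation procedure of \cite{supersonic} to be run at the level of the difference equation. Controlling the boundary traces $tr[\partial_x\phi]$ uniformly in the approximation is the main technical obstacle.
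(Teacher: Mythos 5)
Your overall strategy --- reduce to a delayed von Karman plate via Theorem \ref{rewrite}, then run the Chueshov--Lasiecka quasi-stability machinery on the reduced system to get compactness, finite fractal dimension, and extra regularity of the attracting set --- is exactly the route the paper (following \cite{delay}) takes; Theorem \ref{th:main2} is obtained there as a consequence of Theorem \ref{co:generation} for the abstract delay system \eqref{plate}. However, two of your supporting claims are wrong in ways that would break the execution. First, the dissipation in the reduced model does \emph{not} come ``through the delayed feedback'': the reduction produces the right-hand side $p_0-(\partial_t+U\partial_x)u-q^u(t)$, so the damping is the \emph{instantaneous} piston-type term $-u_t$ (i.e.\ $k_0=1$ in \eqref{plate}), while the delay term $q^u$ is precisely the non-dissipative, non-conservative perturbation one must fight --- it appears at the critical level of regularity and, together with $-Uu_x$, is what destroys the gradient structure. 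If you try to extract the absorbing set from $q^u$ itself you will fail. Second, the von Karman nonlinearity is \emph{not} compact from $H^2_0(\Omega)$ to $L_2(\Omega)$; the sharp Airy estimate \eqref{airy} only renders it \emph{critical}, and the paper stresses that this criticality is a central obstruction. The quasi-stability inequality therefore cannot be closed by treating $f(u^1)-f(u^2)$ as a compact lower-order term; one needs the compensated-compactness structure of the von Karman bracket together with the ``hidden compactness'' of $q(u^t,t)$, which the paper identifies as the main ingredient.

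Two smaller points. Your worry about controlling $tr[\partial_x\phi]$ uniformly in a supersonic regularization at the level of the difference equation is moot: after the reduction the flow variables are eliminated entirely, and the long-time analysis lives on the plate-plus-history space $\mathbf{H}$, so the supersonic loss of tangential regularity is a well-posedness issue, not an attractor issue. And for the extra regularity $\mathscr{U}\subset(H^4\cap H_0^2)(\Omega)\times H^2(\Omega)$, the paper cannot propagate smoothness backward from equilibria (no gradient structure, and the attractor need not be the unstable manifold of $\mathcal N$); it instead uses a quasi-stability argument combined with a suitable $\epsilon$-net construction exploiting only the compactness of the attractor, so your ``elliptic bootstrap'' would have to be organized along those lines rather than as a direct regularity iteration.
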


The proof of Theorem \ref{th:main2} requires modern tools and new long-time behavior technologies applied within this delay framework. Specifically, the approach mentioned above (and utilized in \cite{springer,ryz,ryz2}) {\em does not apply} in this case.  A relatively new technique \cite{chlJDE04,ch-l,springer} allows one to address the asymptotic compactness property for the associated dynamical system without making reference to any gradient structure of the dynamics (not available in this model, owing to the dispersive flow term). In addition, extra regularity and finite dimensionality of the attractor is demonstrated via a quasi-stability approach  \cite{springer}. The criticality of the nonlinearity and the lack of gradient structure prevents one from using a powerful technique of {\it backward smoothness of trajectories}  \cite{springer}, where smoothness is propagated backward from the equilibria.  Without a gradient structure, the attractor may have complicated structure (not being characterized by the equilibria points). In order to cope with this issue, a novel method that is based on suitable  construction of an $\epsilon$ net   and exploits only the compactness of the attractor.

In the presence of additional  damping imposed on the structure, it is reasonable to expect that the entire evolution (both plate and flow) is strongly stable in the sense of strong convergence to  equilibrium states. Such results are shown for the model  above (in the same references) only for the case of {\em subsonic flows}, $U<1$. The key to obtaining such results is a {\em viable energy relation} and a priori bounds on solutions which yield {\em finiteness of the dissipation integral}.
We obtain a result for stabilization of the full flow-plate dynamics (as opposed to just the structural dynamics, as above, for all flow velocities $U \ne 1$). 
For this we recall
  the global bound in \eqref{energybound}
which along with the energy identity in \eqref{ener-rel-subs} 
 allows us to obtain the following: \begin{corollary}\label{dissint}
Let $ k> 0 $ and $\alpha=0$. Then the dissipation integral is finite. Namely, for a generalized solution to \eqref{flowplate} we have $$  \int_0^{\infty} \|u_t(t)\|_{0,\Omega}^2 dt \leq  
K_{u,k} < \infty.$$
\end{corollary}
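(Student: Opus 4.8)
The plan is to derive the dissipation bound directly from the two facts supplied earlier for the subsonic regime: the global-in-time energy bound \eqref{energybound} and the energy identity \eqref{ener-rel-subs}. First I would take the energy identity \eqref{ener-rel-subs} with $s=0$ and arbitrary $t>0$, which, since $\alpha=0$ and the imposed damping is $ku_t$ with $k>0$, reads $\mathcal E(t)+k\int_0^t\|u_t(\tau)\|_{0,\Omega}^2\,d\tau=\mathcal E(0)$. Rearranged, this gives the exact expression $k\int_0^t\|u_t(\tau)\|_{0,\Omega}^2\,d\tau=\mathcal E(0)-\mathcal E(t)$. Since the left side is nondecreasing in $t$, it suffices to bound $\mathcal E(0)-\mathcal E(t)$ uniformly in $t$, i.e.\ to bound $-\mathcal E(t)$ from above (equivalently, $\mathcal E(t)$ from below) uniformly, because $\mathcal E(0)$ is a fixed finite constant determined by the data.

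Second, I would show $\inf_{t\ge 0}\mathcal E(t)>-\infty$. Here is where the structure of $\mathcal E=E_{pl}+E_{fl}+E_{int}$ enters. The quadratic ``positive'' parts ($\frac12\|u_t\|^2$ with $\alpha=0$, $\frac12\|\Delta u\|^2$, $\frac14\|\Delta v(u)\|^2$, and $E_{fl}=\frac12[\|\phi_t\|^2+\|\nabla\phi\|^2]$ since $U<1$ keeps $E_{fl}$ nonnegative) are all $\ge 0$; the potentially troublesome terms are the lower-order ones $-\langle F_0,[u,u]\rangle+\langle p,u\rangle$ in $E_{pl}$ and the interaction term $E_{int}=2U\langle\phi,u_x\rangle_\Omega$. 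I would estimate each of these in modulus by the \eqref{energybound} quantity: $|\langle p,u\rangle|\le \|p\|\,\|u\|\le C\|p\|\,\|\Delta u\|$ by Poincar\'e, $|\langle F_0,[u,u]\rangle|\le \|F_0\|_{H^{1+\delta}}\|[u,u]\|_{H^{-1-\delta}}\le C\|u\|_{H^2}^2$ using the standard bound on the von Karman bracket (or, equivalently, $\langle F_0,[u,u]\rangle=\langle[u,F_0],u\rangle$ with the regularity of $F_0$), and $|E_{int}|\le 2U\|\phi\|_{H^1(\realsthree_+)}\|u\|_{H^2(\Omega)}$ via the trace theorem on $\Omega$. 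Each of these is controlled by the supremum in \eqref{energybound}, which is finite; hence $\mathcal E(t)\ge -K'$ for a constant $K'$ depending only on the data. Combining, $k\int_0^t\|u_t\|^2\le \mathcal E(0)+K'=:K_{u,k}$ for all $t$, and letting $t\to\infty$ by monotone convergence gives the claimed estimate $\int_0^\infty\|u_t(t)\|_{0,\Omega}^2\,dt\le K_{u,k}/k<\infty$.

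One technical point to handle is that \eqref{ener-rel-subs} is stated for strong solutions, whereas the corollary concerns generalized solutions; I would pass to the limit along a sequence of strong solutions approximating the given generalized solution, using continuity of $\mathcal E$ in $Y$ together with \eqref{energybound} (which holds for generalized solutions) to ensure the limiting inequality $\mathcal E(t)+k\int_0^t\|u_t\|^2\le \mathcal E(0)$ survives; an inequality (rather than identity) is all that is needed here and is typically what limits of strong solutions deliver.

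The main obstacle is not the inequality chain itself but making sure the interaction term $E_{int}$ and the in-plane load term are genuinely subordinate: one must verify that \eqref{energybound} — which bounds $\|\Delta u\|$, $\|u_t\|$, $\|\phi_t\|$, $\|\nabla\phi\|$ but not $\|\phi\|_{L_2(\realsthree_+)}$ on its own — nonetheless controls $|\langle \phi, u_x\rangle_\Omega|$. This is fine because the trace of $\phi$ on the bounded set $\Omega\subset\partial\realsthree_+$ is controlled by $\|\phi\|_{H^1(\realsthree_+)}$ via a local trace estimate that only sees $\nabla\phi$ and a Poincar\'e-type inequality near $\Omega$, but this is the step requiring the most care; everything else is a direct consequence of the already-established Theorem \ref{th:subsonic}.
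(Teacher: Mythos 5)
Your proposal is correct and follows exactly the route the paper indicates: the corollary is presented as a direct consequence of the uniform bound \eqref{energybound} combined with the energy identity \eqref{ener-rel-subs}, which is precisely your argument (with the useful extra care about bounding $E_{int}$ and the load terms from below, and about passing from strong to generalized solutions). No discrepancy with the paper's reasoning.
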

\noindent From this finiteness, we can show \cite{conequil1}:
\begin{theorem}\label{regresult}
Let $0\le U<1$ and $\alpha=0$.  Assume $p_0 \in L_2(\Omega)$ and $F_0 \in H^{4}(\Omega)$. Then for all $ k>0$, any  solution $(u(t),u_t(t);\phi(t),\phi_t(t))$ to the flow-plate system \eqref{flowplate} with 
initial data
$$
(u_0,u_1;\phi_0,\phi_1) \in (H_0^2\cap H^4)(\Omega) \times H_0^2(\Omega)\times H^2(\mathbb R^3_+) \times H^1(\mathbb R^3_+).
$$
that are
 spatially localized in the flow component (i.e., there exists a $\rho_0>0$ so that for $|\xb|\ge \rho_0$ we have $\phi_0(\xb) = \phi_1(\xb)=0$) has the property that 
 \begin{align*}\lim_{t \to \infty} \inf_{(\hat u,\hat \phi) \in \mathcal N}\left\{\|u(t)-\hat u\|^2_{H^2(\Omega)}+\|u_t(t)\|^2_{L_2(\Omega)}+\|\phi(t)-\hat \phi\|_{H^1( K_{\rho} )}^2+\|\phi_t(t)\|^2_{L_2( K_{\rho} )} \right\}=0
 \end{align*} 
 for any   $\rho>0$, where 
 $K_\rho=\{ \xb\in\R_+^3\, : \|\xb\|\le \rho\}$ and
 $\mathcal N$ denotes the set of stationary solutions to \eqref{flowplate} (for their 
 existence and properies see \cite{springer}).
\end{theorem}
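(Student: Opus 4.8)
The plan is to establish convergence to the equilibria set $\mathcal N$ by combining the finite dissipation integral from Corollary \ref{dissint} with a reduction of the flow dynamics to a delay term acting on the plate, followed by a gradient-structure/LaSalle argument for the reduced plate system. First I would invoke the reduction result (Theorem \ref{rewrite}, referenced above) to rewrite the flow contribution $(\partial_t+U\partial_x)tr[\phi]$ on the plate as a sum of a term depending on the flow initial data (which, by finite speed of propagation and the spatial localization hypothesis $\phi_0(\xb)=\phi_1(\xb)=0$ for $|\xb|\ge\rho_0$, vanishes on $\Omega$ for $t$ large enough) plus a delayed forcing term of the form $-\int_0^{t^*}\!\! (\text{kernel})\, u_t(t-s)\,ds$ with finite memory $t^*$. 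Thus for $t \ge t^*$ the plate satisfies an autonomous delay equation; this is the key structural point that converts the non-gradient flow-plate system into a system on the extended phase space $H_0^2(\Omega)\times L_2(\Omega)\times L_2(-t^*,0;H_0^2(\Omega))$ that does possess a Lyapunov/gradient structure.

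Next I would use Corollary \ref{dissint}, which gives $\int_0^\infty \|u_t(t)\|^2_{0,\Omega}\,dt < \infty$, together with the a priori bound \eqref{energybound} and the energy identity \eqref{ener-rel-subs}, to run a compactness argument: the trajectory $(u(t),u_t(t))$ is bounded in $H_0^2(\Omega)\times L_2(\Omega)$, the delay component is bounded in its energy space, and the finiteness of the dissipation integral forces $u_t(t)\to 0$ in an averaged sense. Combining the quasi-stability/asymptotic smoothness machinery for the delay system (as in the proof of Theorem \ref{th:main2}, but now exploiting the damping $k>0$) yields precompactness of the full trajectory in the energy topology, hence convergence along subsequences $t_n\to\infty$ to limit points. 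Any such limit point has $u_t\equiv 0$ and zero delay component, so it is a stationary solution; here I would use the extra regularity $F_0 \in H^4(\Omega)$ to guarantee that the set $\mathcal N$ of stationary solutions is well-behaved (nonempty, the relevant elliptic vK problem being solvable), and that the flow component of the limit is recovered from the harmonic (time-independent) flow equation with downwash $U\partial_x \hat u$, giving the convergence $\|\phi(t)-\hat\phi\|_{H^1(K_\rho)}\to 0$ and $\|\phi_t(t)\|_{L_2(K_\rho)}\to 0$ on any ball $K_\rho$ (locally in space, since global flow energy need not decay).

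To upgrade subsequential convergence to full convergence to the \emph{set} $\mathcal N$ (which is what the statement claims, rather than to a single equilibrium), I would invoke the gradient structure of the reduced delay system: the energy $\widehat{\mathcal E}$ (suitably expressed on the extended phase space) is a strict Lyapunov function whose only invariant subsets on which it is constant are equilibria, so the $\omega$-limit set of every trajectory is contained in $\mathcal N$, and $d_{Y_{pl}}((u(t),u_t(t)),\mathcal N)\to 0$ follows from precompactness plus the characterization of the $\omega$-limit set. I expect the main obstacle to be the rigorous justification of the reduction step and the control of the delayed/history term: one must verify that the memory kernel has sufficient integrability and smoothing so that the delay term is not merely bounded but compact-inducing, and that the "tail" coming from the flow initial data genuinely disappears on $\Omega$ — this uses Huygens' principle for the flow equation in $\realsthree_+$ and the zero-extension structure of the downwash, and is where the geometry (half-space, flat plate, $U\ne 1$) is essential. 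A secondary technical point is that, because $E_{fl}$ controls only $\|\phi_t\|^2 + \|\nabla\phi\|^2 - U^2\|\phi_x\|^2$, in the subsonic range $U<1$ this is still equivalent to the full $H^1\times L_2$ flow energy, so no supersonic-type degeneracy enters; one should nonetheless be careful that the local-in-space convergence for $\phi$ is the best one can hope for, since energy radiates to infinity rather than being dissipated.
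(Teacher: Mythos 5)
Your overall architecture --- the delay reduction of Theorem \ref{rewrite}, the finite dissipation integral of Corollary \ref{dissint}, precompactness of the plate trajectory on the extended (history) phase space, and identification of limit points as stationary solutions, with only local-in-space convergence for the flow --- is the route taken in \cite{conequil1}, to which the paper defers for this theorem. The smooth-data hypothesis is indeed what licenses the limit passage in the reduced equation (including the delay potential $q^u$), and your remarks on Huygens' principle and on the subsonic equivalence of $E_{fl}$ with the full $H^1\times L_2$ flow norm are correct.

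The genuine flaw is your closing step. You propose to finish by invoking ``the gradient structure of the reduced delay system,'' with $\widehat{\mathcal E}$ as a strict Lyapunov function on the extended phase space. The paper states explicitly --- twice --- that the reduced delayed model is \emph{intrinsically non-gradient}: the dispersive term $-Uu_x$ and the delay potential destroy the Lyapunov character of the energy, and the identity \eqref{energyrelationkj} for $\widehat{\mathcal E}$ carries the indefinite term $\int_0^t\langle u_x,\psi\rangle\,d\tau$. What is gradient for $k>0$ is the \emph{full} flow--plate system with the energy $\mathcal E$ of \eqref{ener-rel-subs}, but a LaSalle argument cannot be run there because the flow trajectory in $\R^3_+$ is not precompact (energy radiates to infinity). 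The correct closing move, and the one used in \cite{conequil1}, is a direct ``end behavior'' argument: for an \emph{arbitrary} sequence $t_n\to\infty$, finiteness of $\int_0^\infty\|u_t\|^2\,dt$ forces $\int_{t_n}^{t_n+T}\|u_t\|^2\,dt\to 0$, so any limit of the translates $u(t_n+\cdot)$ (which exist by the compactness you establish) is independent of time; passing to the limit in the reduced equation then places it in $\mathcal N$. Since the theorem claims convergence only to the \emph{set} $\mathcal N$ and not to a single equilibrium, this subsequential identification already gives the conclusion, so the LaSalle step you propose is both unavailable and unnecessary. Note also that the limiting history segment is the constant function $\hat u$, not zero, and $q^{\hat u}$ contributes a nontrivial term to the stationary equation satisfied by $\hat u$.
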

The above result remains true for finite energy initial data (only in $Y$) with convergence in a {\em weak sense}. 
Additionally, for the system with rotational inertia present $\alpha>0$ (and corresponding strong damping) the analogous {\em strong} stabilization result holds \cite{chuey,springer} for finite energy initial data 
$$
(u_0,u_1;\phi_0,\phi_1) \in H_0^2(\Omega) \times L^{\alpha}_2(\Omega)\times W_1(\mathbb R^3_+) \times L_2(\mathbb R^3_+).
$$

The fact that $ k>0 $ is needed in order to obtain convergence to equilibria is corroborated by the counterexample  \cite{igor}, 
 which shows that  with $ k =0$ periodic solutions may remain in the limiting dynamics. In fact, $ k >0 $ allows one to prove that the entire flow-structure system is a gradient system. In view of the above result---Theorem \ref{regresult}---with finite energy initial data, we see that {\em any damping} imposed on the structure seems to eliminate the flutter. This is consistent with experiment for clamped plates. From a physical point of view, the aerodynamic damping in subsonic flow for a plate is much lower than in supersonic flow, so the plate tends to oscillate  in  a neutrally stable state in the absence of aerodynamic damping or structural damping \cite{dowellA}. On the other hand, when $ U > 1 $ we expect that such result no longer holds (see, e.g., the numerical analysis   in \cite{jfs}).

We also note that
when the vK nonlinearity is replaced by Berger's nonlinearity, and the damping coefficient $ k > 0$ is taken sufficiently large, it has been shown \cite{conequil2} that 
all {\em finite energy} trajectories converge strongly to equilibria, and hence the flutter is eliminated. Though we have Theorem \ref{regresult} for {\em smooth initial data}, the analogous result for the vK nonlinearity   is  still unknown, unless additional static damping controller is used \cite{conequil1}.

\subsection{Reduced Delayed Model}
A key to obtaining attracting sets is the representation of the flow on the structure via a delay potential (see Section 3.3 in \cite{springer}).  Reducing this full flow-plate problem to a delayed von Karman plate is the primary motivation for our main result and permits a starting-point for long-time behavior analysis of the flow-plate system, which is considerably more difficult otherwise. The exact statement of this key reduction is given in the following assertion:

\begin{theorem}\label{rewrite}
Let the hypotheses of Theorem~\ref{th:main2} be in force, and $(u_0,u_1;\phi_0,\phi_1) \in \cH \times L_2(\Omega) \times H^1(\realsthree_+) \times L_2(\realsthree_+)$. Assume that there exists an $R$ such that $\phi_0(\xb) = \phi_1(\xb)=0$ for $|\xb|>R$.  Then the there exists a time $t^{\#}(R,U,\Omega) > 0$ such that for all $t>t^{\#}$ the weak solution $u(t)$ to (\ref{flowplate}) satisfies the following equation:
\begin{equation}\label{reducedplate}
u_{tt}+\Delta^2u-[u,v(u)+F_0]=p_0-(\partial_t+U\partial_x)u-q^u(t)
\end{equation}
with
\begin{equation}\label{potential}
q^u(t)=\dfrac{1}{2\pi}\int_0^{t^*}ds\int_0^{2\pi}d\theta [M^2_{\theta} u_{\text{ext}}](x-(U+\sin \theta)s,y-s\cos \theta, t-s).
\end{equation}
Here, $M_{\theta} = \sin\theta\partial_x+\cos \theta \partial_y$ and \begin{equation}\label{delay} t^*=\inf \{ t~:~\xb(U,\theta, s) \notin \Omega \text{ for all } \xb \in \Omega, ~\theta \in [0,2\pi], \text{ and } s>t\}
\end{equation} with $\xb(U,\theta,s) = (x-(U+\sin \theta)s,y-s\cos\theta) \subset \realstwo$.
\end{theorem}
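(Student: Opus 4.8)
The plan is to derive an explicit Kirchhoff-type representation for the boundary trace $tr[\phi]$ on $\Omega$ and then show that, after a finite transient dictated by finite speed of propagation, this trace is expressible purely through the history of the downwash $(\partial_t+U\partial_x)u$. By the density of strong solutions guaranteed by Theorems \ref{th:supersonic}--\ref{th:subsonic}, it suffices to establish the identity for strong solutions and then pass to the limit; for strong solutions all the manipulations below are classical, and afterwards one checks that each surviving term retains its meaning for weak solutions.

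First I would remove the convective operator from \eqref{flow} by passing to a moving frame: setting $\widetilde\phi(x,y,z,t)=\phi(x+Ut,y,z,t)$ turns $(\partial_t+U\partial_x)^2\phi=\Delta\phi$ into the ordinary wave equation $\widetilde\phi_{tt}=\Delta\widetilde\phi$ in $\realsthree_+$, with the flat boundary $\{z=0\}$ preserved, initial data still supported in a ball, and Neumann datum $\partial_z\widetilde\phi\big|_{z=0}=-[(\partial_t+U\partial_x)u]_{\text{ext}}(x+Ut,y,t)$, now supported on the plate region $\Omega$ translated by $-Ut$ in the $x$-direction. Extending evenly across $\{z=0\}$ reduces this to the Cauchy problem for the wave operator $\partial_t^2-\Delta$ on all of $\realsthree$ with a single-layer source on this moving planar patch, whose solution is given by Kirchhoff's formula as a retarded surface potential over backward light cones. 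Because $\phi_0,\phi_1$ are supported in $\{|\xb|\le R\}$, $\Omega$ is bounded, and the propagation speed is one in the moving frame, there is a time $t^{\#}=t^{\#}(R,U,\Omega)$ after which the backward light cone issued from any $(x,y,0,t)$ with $(x,y)\in\Omega$ and $t>t^{\#}$ misses the support of the initial data; for such $t$ the Kirchhoff representation of $tr[\phi](x,y,t)$ involves only the Neumann datum. Parametrizing the intersection of the spheres of influence with $\{z=0\}$ by the angle $\theta\in[0,2\pi]$ and transforming back to the original frame (which reinstates the convective shift $x\mapsto x-Us$ in the retardation) puts the representation into a retarded-potential form with kernel point $\xb(U,\theta,s)=(x-(U+\sin\theta)s,\,y-s\cos\theta)$. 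Since $u_{\text{ext}}$ vanishes off $\Omega$, the $s$-integration is automatically cut off at $s=t^{*}$ as in \eqref{delay}, and $t^{*}<\infty$ precisely because $\Omega$ is bounded (for any $0\le U\ne1$ and any $\theta$ the point $\xb(U,\theta,s)$ escapes $\Omega$ for large $s$, uniformly in $\theta$ by compactness).

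Finally I would apply $(\partial_t+U\partial_x)$ to this representation to produce the aerodynamic pressure $p=p_0+(\partial_t+U\partial_x)tr[\phi]$ appearing in \eqref{coupling}. Using the retarded structure to trade the time derivative for $-\partial_s$, integrating by parts in $s$, and converting $\partial_s$ of the spherical means into the radial--tangential derivatives $M_{\theta}=\sin\theta\,\partial_x+\cos\theta\,\partial_y$, one lowers the order of differentiation and splits the output into the boundary contribution at $s\to0^{+}$, which collapses to the instantaneous term $-(\partial_t+U\partial_x)u$, and a remaining finite-memory integral, which is exactly $-q^{u}(t)$ with the $M_{\theta}^{2}$ kernel in \eqref{potential}. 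Substituting into \eqref{flowplate} yields \eqref{reducedplate} for $t>t^{\#}$. The main obstacle is precisely this last step: the careful bookkeeping of the integrations by parts and the $\partial_s\leftrightarrow M_{\theta}$ exchanges, correctly capturing the singular $s\to0^{+}$ boundary term that generates $-(\partial_t+U\partial_x)u$, and verifying that every term keeps its meaning for weak solutions --- here one uses that the clamped condition $u\in H^2_0(\Omega)$ makes $u_{\text{ext}}\in H^2(\realstwo)$, so $M_{\theta}^2 u_{\text{ext}}\in L_2(\realstwo)$ and the space-time integral in \eqref{potential} is well defined, while $(\partial_t+U\partial_x)u$ is a legitimate forcing in the dual sense. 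A secondary technical point is a clean argument for the claimed values of $t^{\#}$ and $t^{*}$, including the supersonic regime, where the moving-frame reduction shows the relevant propagation is still governed by the standard light cone and only the bounded translation by $U$ must be tracked.
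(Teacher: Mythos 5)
Your proposal is correct and follows essentially the same route as the argument the paper relies on (it defers the proof to \cite{springer}, Section~3.3, and \cite{delay}): an explicit Kirchhoff-type representation of $tr[\phi]$ for the convected wave equation, elimination of the initial-data contribution after a finite time $t^{\#}$ by finite speed of propagation and the compact supports of $\phi_0,\phi_1$, and then application of $(\partial_t+U\partial_x)$ with the $\partial_s\leftrightarrow M_\theta$ exchange and integration by parts in $s$ to produce the instantaneous piston term and the delayed potential $q^u$ with the $M_\theta^2$ kernel. Your observations that $t^*<\infty$ precisely because $(U+\sin\theta)^2+\cos^2\theta\ge(|U|-1)^2>0$ for $U\ne1$, and that $u\in H^2_0(\Omega)$ makes \eqref{potential} meaningful for weak solutions, match the standard justification.
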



The flow state variables $(\phi,\phi_t)$  manifest themselves in our rewritten system via the delayed  character of the problem; they appear  in the initial data for the delayed  component of the plate, namely $u^t\big|_{(-t^*,0)}$. 
 The reduced model displays the following features: (i) it does not have gradient structure (due to dispersive and delay terms), (ii) the delay term appears at the critical level of regularity. This raises a natural question regarding long time behavior and the existence of global attractors. 
 However,  despite of the lack of gradient structure, compensated compactness methods allow to ``harvest" some compactness properties from the reduction, so the ultimate dynamics does admit global attracting set. 
 In fact, the result of Theorem \ref{th:main2} is a consequence of a more general result pertinent to delay PDE system, and formulated in Theorem \ref{co:generation}. 

 \section{Simplified/Reduced Models}
\subsection{Plate Model with Delay as a Dynamical System   }
Below we utilize a positive parameter $0<t^*<+\infty$ as the time of delay, and
accept  the commonly  used (see, e.g.,   \cite{Delay-book1995} or \cite{wu-1996}) notation $u^t(\cdot)$
 for function on $s\in [-t^*,0]$ of the form $s\mapsto u(t+s)$.
This is necessary due to the delayed character of the problem
which requires initial data of the prehistory interval  $[-t^*,0]$, i.e., we
need to impose an initial condition
 of the form $u|_{t \in (-t^*,0)} = \eta(\xb, t)$,
 where $\eta$ is a given function on $\Om\times [-t^*,0]$.
 We can choose this prehistory data $\eta$ in various
 classes. In our problem it is convenient to deal
 with Hilbert type structures, and therefore we assume in the further considerations that
 $ \eta \in L_2(-t^*,0;\cH)$. Since we do not assume
 the continuity of $\eta$ in $s\in [-t^*,0]$,  we also
 need to add the (standard) initial conditions of the form
 $u(t=0)= u_0(\xb)$ and $\partial_t u(t=0)=u_1(x)$.
 \par
The delayed system (without rotational inertia) is then given by:
\begin{equation}\label{plate}\begin{cases}
u_{tt}+\Delta^2u+k_0u_t+f(u) +Lu= p_0+q(u^t,t) ~~ \text { in } ~\Omega\times (0,T), \\
u=\Dn u =0  ~~\text{ on } ~ \partial\Omega\times (0,T),  \\
u(0)=u_0,~~u_t(0)=u_1,~~\\ u|_{t \in (-t^*,0)} = \eta\in L_2(-t^*,0;H^2_0(\Omega)).
\end{cases}
\end{equation}
Here the forcing term $q(u^t,t)$ occurring on the RHS of the plate equation encompasses the gas flow. The operator $L$ encompasses any spatial lower order terms which do not have gradient structure.

 We take a \textit{weak solution} to \eqref{plate} on $[0,T]$ to be a function $$u \in L_{\infty}(0,T;H_0^2(\Omega))\cap W^1_{\infty}(0,T;L_2(\Omega)) \cap L_2(-t^*,0;H_0^2(\Omega))$$ such that the variational relation corresponding to \eqref{plate} holds (see, e.g., \cite[(4.1.39), p.211]{springer}).
In order to consider the delayed system as a dynamical system with the phase space $$\mathbf{H}\equiv \cH\times L_2(\Omega)\times L_2(-t^*,0;\cH).
 $$ 
 With the above notation we introduce the  operator
$ S_t\, : \bH\mapsto \bH$ by the formula
\begin{equation}\label{semigroup}
  S_t(u_0, u_1, \eta) \equiv (u(t), u_t(t), u^t),
\end{equation}
where $u(t)$ solves \eqref{plate} 
and $u^t(s) \equiv u(t+s)$, $s \in [-t^*, 0 ]$.

 \begin{theorem}\label{co:generation}
 Under the hypotheses imposed,  \eqref{plate} generates
   a strongly continuous semigroup $S_t : \mathbf{H} \rightarrow \mathbf{H}$ which possesses a compact global  and "smooth" attractor of finite fractal dimension.\footnote{
For the corresponding definitions and basic facts we refer
to \cite{springer,temam}.
}
 \end{theorem}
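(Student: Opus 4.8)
The plan is to verify the hypotheses of the abstract well-posedness and long-time dynamics theory for second-order evolution equations with delay, as developed in \cite{springer,chlJDE04,ch-l}, and then invoke that theory. First I would establish generation of the semigroup $S_t$ on $\mathbf H = \cH\times L_2(\Omega)\times L_2(-t^*,0;\cH)$. Writing the plate equation in \eqref{plate} in first-order form with state $(u,u_t,u^t)$, the linear part is governed by the plate operator $\Delta^2$ (clamped, hence positive and self-adjoint on $L_2(\Omega)$ with domain $\cH$-compatible) plus the damping $k_0u_t$, perturbed by the bounded-below-order term $Lu$ and the delay forcing $q(u^t,t)$. The key structural point is that $q(\cdot,t)$ maps $L_2(-t^*,0;\cH)$ continuously into $L_2(\Omega)$ (at the critical level, per the remark following Theorem~\ref{rewrite}), so that the delay contributes a locally Lipschitz, non-anticipating perturbation; the lifted history component $u^t$ satisfies the transport equation $\partial_t u^t = \partial_s u^t$ with boundary value $u^t(0)=u(t)$, whose generator is standard on $L_2(-t^*,0;\cH)$. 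Combining the m-dissipativity of the plate part (after the usual equivalent renormalization of the energy to absorb $L$ and the non-dissipative pieces) with the locally Lipschitz nonlinearity $f(u)=-[u,v(u)+F_0]$, whose criticality is handled via the sharp Airy estimate \eqref{airy}, gives local well-posedness; the energy balance together with the low-frequency control inequality gives a global a priori bound, hence $S_t$ is a strongly continuous semigroup on $\mathbf H$.

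Next I would prove existence of a compact global attractor. Because there is no gradient structure (the delay and dispersive-type term $Lu$ destroy it), I would not use the Ladyzhenskaya/Hale characterization via unstable manifolds of equilibria; instead I would follow the route of \cite{chlJDE04,ch-l}: show $(S_t,\mathbf H)$ is (i) dissipative, i.e., possesses a bounded absorbing set — this comes from the global energy bound refined with the damping $k_0>0$ and the dissipation integral being finite, exactly as in Corollary~\ref{dissint} adapted to the delay setting; and (ii) asymptotically smooth/asymptotically compact. For (ii) I would use the compensated-compactness / energy-equality method: split a solution into a decaying part and a compact remainder by exploiting the damping, and use the fact that the delay term, although critical, can be estimated in a weaker norm on compact time windows so that Aubin–Lions-type compactness applies to $u^t$; the crucial inequality is a quasi-stability (stabilizability) estimate of the form $E(t)\le C e^{-\omega t}E(0)+\text{(lower-order compact seminorm of the difference of two trajectories)}$, obtained by testing the equation for the difference of two solutions against $u_t$ and a multiplier, and absorbing the delay and von Karman bracket differences using \eqref{airy} and the compact Sobolev embeddings $H^2\hookrightarrow H^{2-\epsilon}$.

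Finally, finite fractal dimension and the "smoothness" (extra regularity $\mathscr U\subset (H^4\cap H^2_0)(\Omega)\times H^2(\Omega)$ in the plate variables) of the attractor would follow from the quasi-stability framework of \cite{springer}: once the quasi-stability inequality is established on the attractor, the general theorems give both finite fractal dimension and the regularity by a bootstrap on the difference of trajectories lying on the attractor. The main obstacle I anticipate is precisely the asymptotic compactness step: the delay term $q(u^t,t)$ sits at the critical regularity level and the system lacks gradient structure, so neither a Lyapunov-function argument nor a naive compact-perturbation argument is available; one must extract compactness purely from the damping and the structure of the delay kernel (its smoothing in the "memory" variable after integration in $s$ and $\theta$ as in \eqref{potential}), and carefully track that the energy identity holds with equality (not just inequality) for the difference of two solutions, which is where the regularization procedure of \cite{supersonic}-type may be needed to justify the multiplier computations.
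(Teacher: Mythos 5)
Your proposal follows essentially the same route the paper indicates: generation via the standard history-space/first-order formulation with the critical von Karman nonlinearity controlled by the sharp Airy estimate \eqref{airy}, and then the quasi-stability method of \cite{chlJDE04,ch-l,springer}, with asymptotic compactness extracted from the hidden regularity (compensated compactness) of the delay term $q(u^t,t)$ rather than from any gradient structure. The paper itself only sketches this argument and defers the details to \cite{delay}, and your identification of the delay term's criticality and the kernel's smoothing in \eqref{potential} as the crux matches that sketch.
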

 The main ingredient of the proof is a variant of {\it hidden compactness }
 of the term $q(u^t,t)$; indeed, a hidden regularity of the delay term leads to a variant of  compensated compactness property which allows one to derive {\it quasi stability} inequality for the dynamics (see \cite{delay}).   We note that quasi-stability method  originated in \cite{chlJDE04} (see also \cite{ch-l,springer,cl-hcdte},
 and is exposited in the recent monograph \cite{ch-dqsds}). For delay models
 this method was also applied recently in \cite{CR2014,ChuRez-ArX2014}.

Nonlinear PDEs with delay have been considered in various sources (see \cite{wu-1996} and references therein). In relation to plate equations with delayed  aerodynamic pressure, \cite{springer} (see also \cite{Chu92b}) provides a rather complete analysis of the delayed vK plate in the presence of rotational terms (and application to flow-plate interactions), and the references \cite{oldchueshov1,oldchueshov2} deal with the plate with delay in the presence of other nonlinear terms.
One should stress that even at the formal level---with or without rotational inertia---the reduced delay problem is challenging. This is due to the fact that the underlying delay system is intrinsically non-gradient---the structure of the aeroelastic potential provides non-conservative and non-dissipative terms that destroy the Lyapunov nature of the energy relation. In view of this, results on asymptotic behavior are challenging. 

\subsection{Piston Theory}
Classical piston theory (or law of plane sections \cite{bolotin,dowellnon,oldpiston}) replaces the {\em acceleration potential of the flow}, $\psi= tr[\phi_t+U\phi_x]$ with $-[u_t+Uu_x]$ in the RHS of the reduced plate equation \eqref{reducedplate}.

%
%
Formally, we can  arrive at the aforementioned, standard (linear) piston theory model by
 utilizing the reduction result in Theorem \ref{rewrite}; a bound  exists \cite[p.334]{springer}: \begin{equation}
||q^u(t)||_{0,\Omega}^2 \le \dfrac{C}{U} \int _{t-\frac{c}{U}}^t||\Delta u(\tau)||_{\Omega}^2 d\tau
\end{equation}
for $U$ sufficiently greater than 1. The constants $c,C>0$ depend only on the diameter of $\Omega$. This indicates that the delay term, $q^u$ decays in the appropriate space as $U$ increases. Hence, 
it is reasonable to guess that $q^u$ can be neglected in the case of large speeds $U$.
\footnote{See also Remark~\ref{piston} for other (physically motivated) expressions 
the ``piston'' term).} 
So we arrive at the following model
\begin{equation}\label{plate-stand}\begin{cases}
u_{tt}+\Delta^2u+f(u) = p_0-[u_t+Uu_x] ~~ \text { in } ~\Omega\times (0,T), \\
u=\Dn u =0  ~~\text{ on } ~ \partial\Omega\times (0,T).
\end{cases}
\end{equation}
  This model was intensively studied in 
the literature for different types of boundary conditions and resistance damping forces, see \cite{springer} and also \cite{ch-l} for related second order abstract models. The typical result is the following assertion:
\begin{theorem}
Under the conditions imposed on the model
the equations \eqref{plate-stand} generates a dynamical system in the space $H_0^2(\Omega)\times L_2(\Omega)$ possessing a compact global attractor of finite fractal dimension which is also ``smooth".	
\end{theorem}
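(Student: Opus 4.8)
The plan is to cast \eqref{plate-stand} as a second-order abstract evolution equation of the form $u_{tt}+\mathscr{A}u+k_0 u_t+f(u)+L u=p_0$, where $\mathscr{A}=\Delta^2$ with clamped boundary conditions (a positive self-adjoint operator with domain $W=\{w\in H^2_0(\Omega):\Delta^2 w\in L_2(\Omega)\}$), the damping is the bounded-below perturbation $k_0 u_t = u_t$, and the non-gradient first-order term is $Lu=U u_x$, a bounded operator on $H^2_0(\Omega)$ relative to $L_2(\Omega)$. First I would establish \emph{well-posedness and generation of the dynamical system} $(S_t, H^2_0(\Omega)\times L_2(\Omega))$: because the von Karman force $f(u)=-[u,v(u)+F_0]$ is locally Lipschitz from $H^2_0(\Omega)$ into $L_2(\Omega)$ by the sharp Airy regularity \eqref{airy}, and the perturbation $Lu=Uu_x$ is globally Lipschitz $H^2_0\to L_2$, a standard fixed-point/Galerkin argument (as in \cite{springer}) yields unique generalized solutions; the energy balance then gives global existence. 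This step is essentially routine given the tools already invoked in the excerpt (in particular \eqref{airy} and the low-frequency control inequality).

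Next I would prove \emph{existence of a compact global attractor}. The main structural obstacle here is the absence of gradient structure: the term $Uu_x$ is non-conservative and non-dissipative, so one cannot appeal to the classical Lyapunov-function route (unstable set of equilibria). Instead I would follow the route already advertised for Theorem~\ref{th:main2} and Theorem~\ref{co:generation}: (i) show dissipativity, i.e. existence of a bounded absorbing set in $H^2_0(\Omega)\times L_2(\Omega)$, by testing with $u_t$ and absorbing the $Uu_x$ term into the damping $k_0 u_t$ together with the low-frequency estimate for $f$ (this requires $k_0>0$, which is built into the model via the piston term $u_t+Uu_x$); and (ii) verify \emph{asymptotic smoothness} of $(S_t,\mathbf{H})$ without gradient structure, using the compensated-compactness / quasi-stability technique of \cite{chlJDE04,ch-l,springer}. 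Concretely, for the difference $z=u^1-u^2$ of two trajectories I would derive a quasi-stability inequality $E_z(t)\le C e^{-\gamma t}E_z(0)+C\sup_{[0,t]}\|z(\tau)\|^2_{L_2(\Omega)}$, where the lower-order term is compact relative to the energy; the key is that the von Karman nonlinearity difference $f(u^1)-f(u^2)$ and the term $Uz_x$ are both dominated, up to compact remainders, by $\|z\|_{L_2}$ on finite time intervals — this is exactly where \eqref{airy} converting the supercritical nonlinearity into a critical one is essential.

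Finally, for the \emph{finite fractal dimension and ``smoothness'' (extra regularity) of the attractor}, I would invoke the quasi-stability machinery once more: a quasi-stable system on its (compact, invariant) attractor automatically has finite fractal dimension, and a bootstrap on the regularity of trajectories lying on the attractor — differentiating the equation in time and using that $u_t$ solves a linear plate equation with right-hand side controlled in $L_2(\Omega)$ — yields $\mathscr{U}\subset (H^4\cap H^2_0)(\Omega)\times H^2_0(\Omega)$, as in \cite[Chapter~9]{springer}. I expect the single hardest point to be step (ii), the asymptotic smoothness: one must extract compactness purely from the structure of the equation, because neither a compact resolvent argument on the full state space nor a Lyapunov/gradient argument is available, and the first-order term $Uu_x$ sits at the same order as the damping, so the cancellation that produces the compact remainder must be carried out carefully (integrating by parts in time, exploiting $\langle z_x,z_t\rangle$ being a total derivative plus lower-order terms). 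Everything else — generation, absorbing set, dimension, regularity — then follows by now-standard second-order quasi-stable-system theory.
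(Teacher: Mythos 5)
Your outline is essentially the approach the paper relies on: the paper offers no proof of this theorem, presenting it as the ``typical result'' and deferring entirely to \cite{springer} and \cite{ch-l}, and those references establish it by precisely the route you describe --- generation via the local Lipschitz property of $f$ from $H^2_0(\Omega)$ to $L_2(\Omega)$ (a consequence of \eqref{airy}), then a compact attractor, finite fractal dimension, and extra regularity via dissipativity plus a quasi-stability inequality, with no appeal to gradient structure. One step in your plan is stated too optimistically: testing with $u_t$ alone gives $\tfrac{d}{dt}E=-\|u_t\|^2-U\langle u_x,u_t\rangle$, and the cross term cannot simply be ``absorbed into the damping,'' since $\|u_x\|$ is not controlled by $\|u_t\|$ at a fixed time; the absorbing set requires the perturbed functional $V=E+\epsilon\langle u_t,u\rangle$ (using $\langle u_x,u\rangle=0$ for $u\in H^2_0(\Omega)$) together with the low-frequency inequality to dominate the resulting lower-order terms, which is the standard device in \cite{springer}. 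With that correction, and noting that the compact term in the quasi-stability estimate for the von Karman difference is typically obtained in $H^{2-\eta}(\Omega)$ rather than $L_2(\Omega)$ (immaterial for the abstract machinery), your plan reproduces the cited proof.
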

A key avenue of research for supersonic flow-plate models in the standard panel configuration \cite{supersonic} (see Section \ref{wellp}) is an explicit comparison between solutions to the piston theoretic plate model and solutions to the clamped flow-plate interaction as $U$ becomes sufficiently large. 
{\em This comparison should be made rigorous}.  This leads to the following fundamental question from modeling point of view:
\begin{description}
\item {\bf Open Question:}
Show that the associated solutions to \eqref{reducedplate} and \eqref{plate-stand} discussed above coincide for (i) $t \to \infty$ with $U$ fixed and sufficiently large, or (ii) as $U \to \infty$ (the so called hypersonic limit) on some {\em arbitrary time interval $[T_1,T_2]$} (where $T_2$ is perhaps $\infty$). Such results would provide rigorous justification (at the infinite dimensional level) for piston theoretic approaches to flow-plate models.\footnote{
The reference \cite{hypersonic} contains a result for the behavior of solutions to the piston theoretic plate 
(with RHS given by $p^*$ above) as $U\to \infty$. The result, however, is only valid for arbitrarily small time intervals, and hence does not provide information about the behavior of solutions for arbitrary $t$. 
}
\end{description}

\begin{remark}\label{piston} In addition to the classical, linear piston theoretic model discussed here (in \eqref{plate-stand}), there are other {\em piston theories}.
For instance, if we a priori  assume that we deal with ``low'' frequencies regimes for the plate, then we can use the following expression for the aerodynamic pressure (see, e.g., \cite{B,dowellA} and the references therein):
\begin{equation*}
p(\xb,t) =p_0- \dfrac{U}{\sqrt{U^2-1}}\left(\left[\dfrac{U^2-2}{U^2-1}\right]u_t+Uu_x\right).
\end{equation*}
in the supersonic case ($U>1$).
We note that this term has a fundamentally different structure, and its contribution to the dynamics is markedly different than the RHS of \eqref{plate-stand}.
It is commonly recognized that the damping due to the aerodynamic flow is usually substantially larger in {\em magnitude} than the damping due to the plate structure. However as we can see  the aerodynamic damping can be negative as well as positive.
 For instance, in the case $1<U < \sqrt{2}$
we obtain negative damping \cite{B,dowellA}. This  has a definite
impact on the dynamics, producing instabilities in the system. 
\end{remark}

\section{Kutta-Joukowsky Boundary Conditions}

Arguably, the  Kutta-Joukowsky  boundary conditions for the flow (\ref{KJC})  are the {\em most important} when modeling an airfoil  immersed in a flow of gas \cite{bal,bal0}. 
Not surprisingly, these boundary conditions---{\em dynamic and mixed} in nature---are also  the most challenging  from mathematical   stand point. This was recently brought to evidence in a long treatise \cite{bal0}. 
Various aspects of the problem---in both subsonic and supersonic regime---have been  discussed  in \cite{bal0} within the context of  mostly one dimensional structures. 
The aim of this section is to revisit the problem by putting it within the framework of modern harmonic analysis. 
  The recently studies  \cite{fereisel,KJ} show how the flow condition (KJC) interacts with the clamped plate in subsonic flows {\em in order to develop a suitable abstract theory} for this particular  model. 
   Though the analysis is subsonic for (KJC), utilizing the flow energy from the supersonic panel (as above in Theorem \ref{th:supersonic}) is effective in the abstract setup of this problem.  
In fact,  even in the subsonic case, the analysis of semigroup generation proceeds  through the technicalities developed  earlier for supersonic case \cite{supersonic}. The key distinction from the analysis of the clamped  flow-plate interaction (owing to the dynamic nature of the boundary conditions) is that a {\em dynamic flow-to-Neumann map} must be utilized to incorporate the boundary conditions into the abstract setup. The regularity properties of this map are critical in admitting techniques from abstract boundary control \cite{redbook}, and are determined from the Zaremba elliptic problem \cite{savare}.
  To again decouple the dynamics into a dissipative piece and a perturbation, a trace regularity result must be established. The necessary trace regularity hinges upon the invertibility of an operator which is analogous (in two dimensions) to the finite Hilbert transform \cite{bal0,bal4,tricomi}.   And this is a critical additional element of the challenging  harmonic analysis brought about  by the (KJC). 
When the problem is reduced in dimensionality to a beam structural model, this property can be demonstrated and our analysis  has parallels  with that in \cite{bal0,bal4} (and older references which are featured therein). Specifically, one must invert the finite Hilbert transform in $L_p(\Omega)$ for $p \ne 2$; in higher dimensions, this brings about nontrivial (open) problems in harmonic analysis and the theory of singular integrals when (KJC) is in force.


  The full system with  (\ref{KJC}) can be  written in terms of aeroelastic potential variable as:
\begin{equation}\label{flowplate2}\begin{cases}
u_{tt}+\Delta^2u+f(u)= p_0+tr[\psi] & \text { in } \Om\times (0,T),\\
u=\Dn u=0 & \text{ on } \pd\Om\times (0,T),\\
(\partial_t+U\partial_x)\phi=\psi & \text { in } \realsthree_+ \times (0,T),\\
(\partial_t+U\partial_x)\psi=\Delta \phi & \text { in } \realsthree_+ \times (0,T),\\
\Dn \phi = -(\partial_t+U\partial_x)u & \text{ on } \Omega   \times (0,T),\\
(\partial_t+U\partial_x)  \phi = 0 & \text{ on } \realstwo \backslash \Omega \times (0,T),\end{cases}
\end{equation}
with the given initial data $(u_0,u_1,\phi_0, \psi_0 ) $. 
 We make use of the flow acceleration multiplier $(\partial_t+U\partial_x)\phi \equiv \psi$ for the {\em subsonic flow}, taken with (KJC). 
Thus for the flow dynamics, instead of $(\phi;\phi_t)$ we again have
the state variables  $(\phi;\psi)$ (as in the supersonic panel case)   which then leads to a {\it  non dissipative}  energy balance, see
(\ref{energyrelationkj}).

 Our result is formulated under the following regularity condition (to be discussed later).
 \begin{condition}[{\bf Flow Trace Regularity}]\label{le:FTR0}
  Assume that  $\phi(\xb,t)$  satisfies \eqref{flow} and the Kutta-Joukowsky condition (KJC),
   and $\partial_ttr[\phi],~~ \partial_xtr[\phi]  \in L_2(0,T;H^{-1/2-\epsilon}(\realstwo))~~~~\forall\, T>0.
$
In addition,  $\psi = \phi_t + U \phi_x $ satisfies the estimate
\begin{equation}\label{trace-reg-est-M0}
\int_0^T\|tr[\psi](t)\|^2_{H^{-1/2 -\epsilon} (\realstwo)}dt\le C_T\left(
E_{fl}(0)+
 \int_0^T\| \Dn \phi(t) \|_{\Omega}^2dt\right)
\end{equation}
\end{condition}
We note that the regularity required by Condition \ref{le:FTR0} is exactly the one that will make energy relation meaningful. 
To wit: $u_x(t) \in H^1_0(\Omega)  $ and $ tr[\psi] \in L_2(0,T; H^{- \theta } ), ~\theta > 1/2 $ defines the correct  duality pairing in the tangential direction on $\mathbb R^2$. 
It is also at this point where we use the fact that $u$ satisfies  $ u=0  $ on $\partial \Omega $ . Thus simply supported and clamped boundary conditions imposed on the structure  fully cooperate with this regularity.  
The principal result of \cite{KJ} (see also \cite{fereisel}) reads as follows: 
\begin{theorem}\label{T1}
With reference to the model (\ref{flowplate2}), with  $0 \leq U < 1$:
  Assuming the trace regularity Condition \ref{le:FTR0} holds for the aeroelastic potential $\psi = (\phi_t+U\phi_x)$,
  there exists a unique finite  energy solution which exists on any $[0,T]$.  
  This is to say, for any $T > 0 $, 
  $(\phi, \phi_t, u, u_t ) \in C(0, T; Y )$  for all initial data  $(\phi_0, \phi_1, u_0, u_1 ) \in  Y $. 
  This solution depends continuously on the initial data.  
\end{theorem}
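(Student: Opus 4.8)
The plan is to recast \eqref{flowplate2} as an abstract Cauchy problem $\dot y=\bA y+\cF(y)$ on the finite--energy space $Y$ of \eqref{space-Y}, first performing the change of flow variable $\psi=(\partial_t+U\partial_x)\phi$ exactly as in the supersonic panel analysis behind Theorem~\ref{th:supersonic}, so that the flow state becomes $(\phi,\psi)$ and the adapted flow energy $\widehat{\mathcal E}_{fl}=\tfrac12\big(\|\psi\|^2_{0,\realsthree_+}+\|\g\phi\|^2_{0,\realsthree_+}\big)$ replaces $E_{fl}$. The dynamic, mixed Kutta--Joukowsky condition \eqref{KJC} would be encoded through a \emph{dynamic flow-to-Neumann map}: given a Neumann datum on $\Om$ together with the constraint $tr[\psi]=0$ on $\realstwo\setminus\Om$, one solves the corresponding static Zaremba mixed elliptic problem in $\realsthree_+$, whose sharp interior and trace regularity is borrowed from \cite{savare}. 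With that map in hand the generator splits as $\bA=\bA_0+\bB$, where $\bA_0$ carries the half-space wave dynamics under homogeneous boundary conditions together with the clamped biharmonic plate operator, and $\bB$ collects the transport and interface-coupling terms.

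Next I would establish generation of the linear semigroup. Because of the transport term $U\partial_x$ and the interface coupling, $\bA$ is \emph{not} dissipative in the natural $Y$-norm; following the renormalization used for the subsonic panel and the supersonic flow I would equip $Y$ with the inner product induced by $\widehat{\mathcal E}=E_{pl}+\widehat{\mathcal E}_{fl}$, shifted so as to absorb the sign-indefinite load terms $-\langle F_0,[u,u]\rangle+\langle p_0,u\rangle$ in $E_{pl}$ via the Airy bound \eqref{airy} and a Poincar\'e-type inequality, making $\widehat{\mathcal E}+C$ topologically equivalent to $\|\cdot\|_Y^2$. In this renormalized metric $\bA_0-\omega I$ should be maximal dissipative: dissipativity is the energy computation in which the boundary contributions from the flow Green identity and from the plate right-hand side combine into the single indefinite term $-\langle u_x,tr[\psi]\rangle_\Om$, which is $\omega$-bounded once the mapping properties of the flow-to-Neumann map are invoked; maximality follows by solving the resolvent equation $(\lambda I-\bA_0)y=z$, which decouples into a static Zaremba problem for the flow and a static clamped-biharmonic problem for the plate. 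Lumer--Phillips then yields a $C_0$-semigroup for the homogeneous linearization, and $\bB$ is incorporated afterwards. The delicate point is that the plate forcing $tr[\psi]$ lies a priori only in $H^{-1/2-\epsilon}(\Om)$, not in $L_2(\Om)$, so it cannot be treated as a bounded operator; instead it must be kept inside the energy identity, and the estimate \eqref{trace-reg-est-M0} of Condition~\ref{le:FTR0} is precisely what makes the duality pairing $\langle u_x,tr[\psi]\rangle_\Om$ meaningful and controllable --- here one crucially uses $u_x\in H^1_0(\Om)$, which holds because the plate is clamped, so that $\nabla u$ vanishes on $\partial\Om$.

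For the nonlinear problem, the von Karman term $f(u)=-[u,v(u)+F_0]$ is locally Lipschitz from $H^2_0(\Om)$ into $L_2(\Om)$, hence from $Y$ into $Y$, by the $W^{2,\infty}$-regularity \eqref{airy} of the Airy stress function; a standard fixed-point argument on $C([0,\tau];Y)$ then gives a unique local-in-time generalized (mild) solution, which is also weak by a density argument. Global existence on every $[0,T]$ follows from the energy identity \eqref{energyrelationkj} (extended to generalized solutions by density, or by the regularization procedure of \cite{supersonic}): estimating $\big|\int_0^t\langle u_x,tr[\psi]\rangle_\Om\,d\tau\big|\le\tfrac12\int_0^t\|u_x\|^2_{H^{1/2+\epsilon}(\Om)}+\tfrac12\int_0^t\|tr[\psi]\|^2_{H^{-1/2-\epsilon}(\Om)}$ and bounding the last integral via Condition~\ref{le:FTR0} in terms of $E_{fl}(0)$ and $\int_0^t\|u_t+Uu_x\|^2_\Om$, one arrives at $\widehat{\mathcal E}(t)\le C_T\big(1+\widehat{\mathcal E}(0)\big)+C_T\int_0^t\widehat{\mathcal E}(\tau)\,d\tau$, and Gronwall produces an a priori bound on $[0,T]$ (growing in $T$, as expected in this non-dissipative regime, cf. the comments following Theorem~\ref{th:subsonic}), so the local solution does not blow up and extends to $[0,T]$. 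Continuous dependence on the $Y$-data is obtained by applying the same energy and Gronwall estimate to the difference of two solutions, using the local Lipschitz property of $f$.

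I expect the principal obstacle to be precisely the reconciliation of the ill-defined interface trace $tr[\psi]$ --- which is \emph{not} an $L_2(\Om)$ forcing for the plate --- with the semigroup framework; this succeeds only through the hidden regularity quantified in Condition~\ref{le:FTR0}, whose ultimate verification rests on the sharp mapping properties of the dynamic flow-to-Neumann map for the \emph{mixed} (Zaremba) boundary value problem \cite{savare} and, at a deeper level, on the invertibility of an operator analogous to the finite Hilbert transform. Everything else is an adaptation of the supersonic panel machinery of Theorem~\ref{th:supersonic} to the mixed boundary condition.
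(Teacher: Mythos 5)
Your outline follows essentially the same route the paper sketches (and attributes to \cite{KJ} and \cite{supersonic}): the change of flow variable to $\psi=(\partial_t+U\partial_x)\phi$, the dynamic flow-to-Neumann map built on the Zaremba problem of \cite{savare}, the splitting into an $\omega$-dissipative generator plus a perturbation handled inside the energy identity via the duality pairing $\langle u_x,tr[\psi]\rangle_\Omega$ that Condition \ref{le:FTR0} renders meaningful, and the standard locally Lipschitz treatment of the von Karman term with a Gronwall-type global bound. This matches the paper's argument in both structure and key ingredients, so no further comparison is needed.
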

The proof of Theorem \ref{T1} follows the technology developed for the supersonic case in \cite{supersonic} and is given in \cite{KJ}. 
In view of Theorem \ref{T1} the final conclusion on generation of nonlinear semigroup is pending upon verification of the flow regularity Condition. 
While the complete solution  to this question is still unavailable, and pending further progress in the theory of finite Riesz transforms, we can provide positive answer in the case when $\Omega$ is one dimensional. 
In fact, this positive assertion also follows  a-posteriori from direct  calculations given in \cite{bal}, and based on the analysis of finite Hilbert transforms. 
The arguments given below are independent and more general, with potential adaptability   to multidimensional cases. 
The corresponding result is formulated below. 
\begin{theorem}\label{1dtracereg}
Assume $\Omega=(-1,1)$ (suppressing the span variable $y$). Then the flow trace regularity in Condition \ref{le:FTR0} holds for $\phi$.
In this case  the semiflow defined by (\ref{flowplate2}) taken with (KJC) generates a continuous semigroup.
\end{theorem}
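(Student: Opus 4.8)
\emph{Proof proposal.} The plan is to verify Condition~\ref{le:FTR0} by hand for the one-dimensional geometry and then invoke Theorem~\ref{T1}. Work on the half-plane $\mathbb{R}^2_+=\{(x,z):z\ge 0\}$ with the structure occupying $\Omega=(-1,1)\times\{0\}$, and apply a Laplace transform in $t$ ($t\mapsto s$, $\mathrm{Re}\,s=\sigma>0$). The flow equation \eqref{flow} becomes the stationary problem $(s+U\partial_x)^2\widehat\phi=\Delta_{x,z}\widehat\phi$ in $z>0$, which for $U<1$ is uniformly elliptic. A Fourier transform in $x$ (dual variable $\xi$), keeping the branch that decays as $z\to+\infty$, gives $\widetilde\phi(\xi,z)=\widetilde\phi(\xi,0)\,e^{-\mu(\xi)z}$ with $\mu(\xi)=\big[(1-U^2)\xi^2+2isU\xi+s^2\big]^{1/2}$, $\mathrm{Re}\,\mu>0$, hence the full-line relation $\partial_z\widehat\phi\big|_{z=0}=-\mu(D_x)\,\gamma[\widehat\phi]$, where $\mu(D_x)$ is a Fourier multiplier behaving like $\sqrt{1-U^2}\,|D_x|$ at high frequency. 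This last property is precisely the ellipticity that fails for $U\ge 1$, and it is why the analysis can still borrow the supersonic flow energy as in Theorem~\ref{th:supersonic}.

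Next I would install the Kutta--Joukowsky conditions. Set $w=\gamma[\widehat\phi]$ on $\mathbb{R}$, so that $tr[\widehat\psi]\big|_{z=0}=(s+U\partial_x)w$. The condition $\psi=0$ on $\mathbb{R}\setminus\Omega$ forces $(s+U\partial_x)w=0$ for $|x|>1$; among decaying solutions this yields $w\equiv 0$ on $x<-1$ and $w(x)=w(1)e^{-s(x-1)/U}$ on $x>1$ --- the upstream/downstream asymmetry being exactly the physical wake condition. On $\Omega$ the downwash condition reads $-\mu(D_x)w=\widehat g$, with $\widehat g$ the transform of the Neumann datum $\partial_\nu\phi=-(\partial_t+U\partial_x)u$, and $\widehat g\in L_2(\Omega)$; the clamped condition $u=0$ on $\partial\Omega$ is what makes the structure trace behave well at the interface points. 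Since $\mu(D_x)$ is nonlocal while $w$ is supported on $[-1,\infty)$ with a prescribed exterior profile, this reduces to a one-dimensional singular integral equation for $w\big|_{(-1,1)}$. Splitting $\mu(D_x)$ into its principal part --- a constant multiple of $|D_x|$, whose localization to $(-1,1)$ after one differentiation is the finite Hilbert transform $T$ on $(-1,1)$ --- plus a smoothing remainder, the equation takes the form $T(\partial_x w)=(\text{known data})+(\text{lower-order/compact})$ on $(-1,1)$.

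The core step is then the inversion of $T$. Since $T$ is injective with dense but non-closed range on $L_2(-1,1)$, one passes to the scale $L_p(-1,1)$ with $1<p<2$, where $T$ is surjective, and uses the S\"ohngen--Tricomi inversion formula, the solution being selected in the admissible class by the wake condition at the trailing edge; the inverse produces the characteristic $(1-x^2)^{\pm1/2}$ endpoint weights, which account for the essentially one-half derivative lost at the interface points $x=\pm1$ (the signature of this Zaremba-type mixed problem) together with the extra $\epsilon$. Solving for $w$ --- hence $\widehat\phi$ and $tr[\widehat\psi]=(s+U\partial_x)w$ --- and tracking the bounds \emph{uniformly in $s$ along $\mathrm{Re}\,s=\sigma$}, one obtains $\|tr[\widehat\psi]\|_{H^{-1/2-\epsilon}(\mathbb{R})}\le C\big(\|\widehat g\|_{L_2(\Omega)}+(\text{initial flow data})\big)$; inverting the Laplace transform and applying Plancherel in $t$ gives the space-time estimate \eqref{trace-reg-est-M0}, with the $E_{fl}(0)$ term absorbing the contribution of $(\phi_0,\psi_0)$. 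With Condition~\ref{le:FTR0} thus verified for $\Omega=(-1,1)$, Theorem~\ref{T1} yields existence, uniqueness, and continuous dependence of finite-energy solutions, i.e.\ generation of the continuous semigroup.

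\textbf{Main obstacle.} The delicate part is this last step: one must identify the localized principal operator with the finite Hilbert transform up to genuinely subordinate (compact or smoothing) errors, choose the correct $L_p$ scale so that $T$ is invertible and the wake condition fixes the free constant, and --- hardest of all --- control the norm of $T^{-1}$ and of the remainder terms \emph{uniformly in the Laplace parameter} so that the bounds survive the passage back to the time variable, all while keeping the endpoint-weight loss down to $1/2+\epsilon$. This is precisely the harmonic analysis of finite Hilbert/Riesz transforms flagged in the discussion preceding the theorem, and it is the point at which the one-dimensional assumption $\Omega=(-1,1)$ is genuinely used.
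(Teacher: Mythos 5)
Your proposal is correct in outline and lands on the same two pillars as the paper --- Fourier--Laplace reduction to a boundary multiplier relation $\Dz\hat\phi = m\,\hat\psi$, and invertibility of the finite Hilbert transform on $L_p$ for $1<p<2$ followed by Sobolev embedding into $H^{-1/2-\epsilon}$ --- but the route through the middle is genuinely different. You solve the resulting airfoil-type singular integral equation \emph{explicitly}: prescribe the wake profile of $w=tr[\hat\phi]$ off $(-1,1)$ from the Kutta--Joukowsky condition, reduce to $T(\partial_x w)=\text{data}$, and invoke the S\"ohngen--Tricomi inversion formula with its $(1-x^2)^{\pm 1/2}$ endpoint weights; this is essentially the Possio-equation approach of Balakrishnan, which the paper explicitly contrasts itself with. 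The paper instead stays at the symbol level: it factors $m(z,\eta)=m_0(\eta)\,r(z,\eta)$ with $m_0$ the Hilbert-transform symbol, bounds $r$ from below away from the characteristic region, observes the degeneration $r\sim c/\sqrt{\eta}$ as $|z_U|\to 1$ (this, not the endpoint weights, is where the paper locates the $1/2$-derivative loss --- your attribution of the $-1/2$ to the $(1-x^2)^{\pm1/2}$ singularities at $x=\pm1$ is a different mechanism, and the $p<2$ restriction is what the endpoint behavior actually costs), and then interpolates between the $L_p\to L_p$ Fredholm property of $P_\Omega\mathcal H E_\Omega$ and the $W^{-1,p}\to L_p$ mapping property of the operator with symbol $m_0(\eta)\eta^{-1}$ to conclude invertibility into $W^{-1/2,p}$. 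What your approach buys is concreteness (the wake condition visibly selects the solution, and the constants are in principle computable); what the paper's buys is independence from explicit one-dimensional inversion formulas, which is exactly the feature its authors hope will extend to finite Riesz transforms in two dimensions. The obstacle you flag --- uniformity of all bounds in the Laplace parameter and control of the remainder after extracting the principal part --- is real in both treatments and is handled in the paper only by the remark that ``our analysis is uniform in $\beta$,'' so you have not missed anything the survey actually supplies.
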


In what follows below we shall  address validity of Condition \ref{le:FTR0} in the one dimensional case. 
 In particular, we wish to provide  the relation to $L_p$, ~$p \ne 2 $,  invertibility of Hilbert/Riesz  transforms.\footnote{More  complete analysis and details are given in \cite{KJ}.}
The thrust of the argument is  the trace regularity of the following flow problem in $\realsthree_+$: \begin{equation}\label{flow1}\begin{cases}
(\partial_t+U\partial_x)^2\phi=\Delta \phi & \text { in } \realsthree_+ \times (0,T),\\
\phi(0)=\phi_0;~~\phi_t(0)=\phi_1,\\
\Dn \phi = h(\xb,t)& \text{ on } \Omega \times (0,T)\\
tr[\phi_t+U\phi_x]=tr[\psi]=0, ~~\xb \in \realstwo \backslash \Omega
\end{cases}
\end{equation}
where $h(\xb,t)$, defined on $\Omega$,  is the downwash generated on the structure and $tr [\psi] $ is the aeroelastic potential. The flow trace regularity problem can  be rephrased as the following inverse type boundary-boundary  problem:
\[
\mbox{\bf Given $h(x,y,t)  \in L_2( \mathbb R \times \Omega) $, find $\psi$, and hence $tr[\psi]$, on $\Omega$. }
\]

In line with the similar analyses in \cite{miyatake,sakamoto}, we take zero initial data (later we may apply the principle of superposition). We consider the Fourier-Laplace transform of the original linear equation (formally, sending $(x,y) \to i(\eta_x,\eta_y)$ and $t \to \tau=(\alpha +i\beta$): 
$$ (\tau+iU\eta_x)^2\hat{\phi}+|\eta|^2\hat{\phi}=( \tau^2  + 2U i \eta_x \tau  + (1- U^2) \eta_x^2 + \eta_y^2 ) \hat{\phi} = \Dz^2 \hat{\phi}.$$
Let us introduce the following PDO symbol: 
$$D (\eta, \tau) \equiv (\tau^2  + 2U i \eta_x \tau  + (1- U^2) \eta_x^2 + \eta_y^2), $$ and  
so we must solve (in $z$) the following ODE: 
$$\Dz^2 \hat{\phi} = D (\tau, \eta)  \hat{\phi},$$
whose general solution is 
 $$ \hat{\phi}(\tau,\eta,z) = \hat{\phi} (\tau, \eta, 0) e^{-z \sqrt{D}}. $$ 
 Our next step is to relate the boundary conditions to the the normal derivative of $\phi$  and the acceleration potential $\psi$: 
 $$ \hat{\psi}(\tau,\eta, z) = (\tau + i  U \eta_x )  \hat{\phi} (\tau, \eta, 0) e^{-z \sqrt{D}} $$
 $$ \Dz \hat{\phi} = - \sqrt{D}  \hat{\phi} (\tau, \eta, 0)$$ 
 Hence, on $\partial \realsthree_+$  we have the relation
\begin{equation}\label{ms} \Dz \hat{\phi} = \frac{-\sqrt{D}}{\tau + iU \eta_x } \hat{\psi} = m(\tau, \eta) \hat{\psi} \end{equation}
with
\begin{equation}\label{mul}
m(\eta, \tau)\equiv \frac{-\sqrt{D}}{\tau + iU \eta_x } 
\end{equation}
which corresponds to zero order PDO $\in S^{0,1}$ \cite{hormander,taylor}.
This relation is supplemented with the information 
\begin{eqnarray} 
\Dz \phi|_{z=0} = h \in L_2 (\reals \times \Omega ) \\
{\psi}(t, x, y, 0)  =0 , ~\text{ outside }~\Omega. 
\end{eqnarray}
Let $\cT$ denotes the operator corresponding to the multiplier $m (\eta, \tau)$ in (\ref{ms}). 
Let $P_{\Omega}: \realstwo \to \Omega $ denote projection on $\Omega$. 
Then for any $\psi$ which is zero off  $\Omega$, the problem stated above can be recast as: given $h$ , find $\psi$ so that 
\begin{equation}\label{P}
h =P_{\Omega}\cT\psi, x \in \Omega, t > 0 
\end{equation}
noting that $\cT $ acts on a ``truncated" function $\psi$, so that 
the PDO  
$P_{\Omega} \cT $ can be viewed  as PDO on $\Omega \times (0, \infty) $ (viewing $L_2(\Omega)$ as a subset of $L_2(\realstwo)$ with associated projection). 
We construct $\cT$ as follows:
for any  $\psi$ with $\psi =0 $ outside $\Omega$, we construct Laplace (time) Fourier (space) transform of $\psi$ which
we denote $\hat{\psi} (\tau, \eta) $. Then the operator $\cT $ is the PDO 
such that 
\begin{equation}\label{T}
(\widehat{\cT \psi})(\tau, \eta)  = m(\tau, \eta) \hat{\psi} (\tau, \eta)\end{equation}
 Taking the inverse of the Fourier-Laplace transform, and applying projection on $\Omega$ gives the actual definition of the operator. 
Thus, the problem of trace regularity (PDO  Neumann-Dirichlet map) reduces to solving (\ref{P}) with the PDO   given by (\ref{T})  and  Mikhlin type of multiplier $m(\tau, \eta) $.
 The relationship between $\psi$ and $h$ depends on the properties of the Mikhlin multiplier $m(\tau, \eta )$ in the class of symbols $S^{0,1}$ \cite{hormander,taylor}. In fact, as we will show below, $\cT$ is related to the inverse of a finite Hilbert transform in the $x$ variable. We make this notion precise below, but we do note at this stage that the integral transform which arises is not standard in the two dimensional scenario. It is only when  when $\Omega$ is reduced to an interval that the multiplier above contains the classical {\it finite} Hilbert transform \cite{bal,tricomi}. Otherwise we deal with a ``finite" Riesz transform. 
 If we denote by $\cH$ Hilbert transform with the symbol 
 $symb(\cH) \equiv - i  \frac{|\eta_x|}{\eta_x} $and by $E_{\Omega} $ the extension operator (by zero) from $L_p(\Omega)$  onto $L_p(\mathbb R^n) $, then  {\it finite Hilbert transform} can be written as 
 $$ \cH_f \equiv P_{\Omega} \cH E_{\Omega}  \in \mathscr L(L_p(\Omega) \rightarrow L_p(\Omega) ), 1 < p < \infty.$$ 
 When $\Omega = I $ this corresponds to a classical finite Hilbert transform. For higher dimensions one can consider $\Omega$ as an union of intervals -generalizing the concept to finite Riesz transforms. 
 The main point critical to the analysis is the fact that \cite{FH}
 \begin{equation}\label{Hf}
  \cH_f ^{-1} \in \mathscr L(L_p(\Omega) ), p < 2 
  \end{equation}
 \begin{remark}
 The connection between integral equations appearing in the study of aeroelasticity and invertibility of finite Hilbert fransforms has been known for many years \cite{FH}, and dates back to Tricomi. This approach has been critically used in \cite{bal0,bal4} where the analysis is centered on solvability of integral equations  of Possio's type connecting the downwash with the aeroelastic potential. We follow the same conceptual route as these references, with different technical tools. Our approach is based on microlocal analysis---rather than explicit solvers of integral equations arising in a very special case (of a  purely one dimensional setting). Though our final estimate depends on an assumption which has only been demonstrated for the one dimensional case, we believe that the microlocal approach provides new ground for extending the flow-structure analysis to  multidimensional settings.
 \end{remark}
In line with hyperbolic character of the symbol, it is convenient to introduce the variable 
~$ z_U\equiv \frac{\beta}{\eta} +U.$~
 In the case when the dimension of $\Omega$ is equal to one 
  our multiplier then becomes, 
$$m(z, \eta) = \frac{-i |\eta|}{\eta}  r(z, \eta)  \equiv m_0(\eta) r(z, \eta) $$
 where $$ r(z, \eta) \equiv \frac{\sqrt{D_1}}{ z_U - i \alpha \eta^{-1} },$$
with 
$$ D_1 (z, \eta)  \equiv  -\alpha^2 \eta^{-2}-1 + z_U^2  - 2i \frac{\alpha}{\eta} ( z_U ) .$$
We are interested in the invertibility (and regularity properties of the inverse) of the operator associated to the multiplier $m(z,\eta)$. Noting that $m_0 (\eta) $  is a symbol corresponding to Hilbert transform, we are seeking a bound from below for $r(z, \eta)$. 
The critical regions for the analysis of this symbol occur for large values:  $ |\eta| >> R , |\beta| >> R $, for $R$ sufficiently large.  
Analyzing the symbol $r(z, \eta) $, we note that when ~$ |z| \rightarrow 0 $~ and~ $ |z | \rightarrow \infty $, the symbol $r (z, \eta) $ 
converges to constant, nonzero values. A loss of regularity occurs when $ |z| \rightarrow 1  $. In that case, the asymptotic behavior is characterized by $$r(z, \eta) \sim \frac{c}{\sqrt{\eta}}.$$ 
This indicates a loss of $1/2$ of a spatial derivative, i.e. a bound from below of the form $$r(z,\eta)\sqrt{\eta}>c>0.$$ The above  leads to the following lemma:
\begin{lemma}
For $\Omega=I \subset \reals$ a compact interval, the operator $\cF$ associated with the multiplier $m(\eta)$  is invertible from $L_p(\Omega) \rightarrow W^{-1/2, p} (\Omega) 
$ for $ p \in (1,2) $. \end{lemma}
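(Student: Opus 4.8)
The plan is to reduce the invertibility claim to the lower bound on the symbol $r(z,\eta)$ announced just above the statement, and then to translate that symbol-level estimate into the mapping property $L_p(\Omega)\to W^{-1/2,p}(\Omega)$. Concretely, I would proceed in three stages. First, decompose the multiplier as $m(\eta)=m_0(\eta)\,r(z,\eta)$ where $m_0(\eta)=-i|\eta|/\eta$ is exactly the symbol of the Hilbert transform $\cH$. Since on the interval $\Omega=I$ the operator $P_\Omega\cH E_\Omega=\cH_f$ is the finite Hilbert transform, and by \eqref{Hf} we have $\cH_f^{-1}\in\mathscr L(L_p(\Omega))$ for $p\in(1,2)$, the factor $m_0$ contributes a bounded invertible piece on $L_p(\Omega)$ in that range. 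So the whole question is whether the remaining factor associated with $r(z,\eta)$ is invertible into $W^{-1/2,p}$.

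Second, I would establish the two-sided control of $r$. The asymptotic analysis sketched above shows that as $|z|\to 0$ and $|z|\to\infty$ the symbol $r(z,\eta)$ tends to nonzero constants, so away from the resonance $|z|\to 1$ the operator associated with $r$ is a zero-order pseudodifferential operator which is elliptic and hence invertible on $L_p$. The only loss occurs near $|z|=1$, where $r(z,\eta)\sim c\,\eta^{-1/2}$, equivalently the genuine lower bound $r(z,\eta)\sqrt{\eta}\ge c>0$ holds uniformly in the critical regime $|\eta|\gg R$, $|\beta|\gg R$. This says precisely that $r^{-1}$ behaves like a symbol in $S^{1/2}$ in the $x$-variable, so the operator $\mathcal F^{-1}$ loses at most half a derivative: it maps $L_p(\Omega)$ boundedly into $W^{-1/2,p}(\Omega)$. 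To make this rigorous one invokes the Mikhlin–Hörmander multiplier theorem in the $L_p$ setting (the symbols here are of Mikhlin type, $\in S^{0,1}$ in the notation of \cite{hormander,taylor}, with the $\tau$-dependence treated as a parameter after the Laplace transform), together with the standard boundedness of pseudodifferential operators of order $s$ between $L_p$ and $W^{-s,p}$ for $1<p<\infty$.

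Third, I would assemble the pieces: the operator $\mathcal F$ associated with $m(\eta)$ factors (microlocally, modulo lower-order and smoothing terms) as $\cH_f$ composed with the $r$-operator; inverting gives $\mathcal F^{-1}$ as the $r^{-1}$-operator composed with $\cH_f^{-1}$, the first landing in $W^{-1/2,p}(\Omega)$ and the second bounded on $L_p(\Omega)$ for $p\in(1,2)$. The restriction to $p<2$ enters exactly and only through \eqref{Hf}, the $L_p$-invertibility of the finite Hilbert transform for $p<2$ (for $p\ge 2$ the finite Hilbert transform on an interval is injective but not surjective, which is the obstruction that forces this range).

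The main obstacle I anticipate is the passage from the pointwise symbol bound $r(z,\eta)\sqrt{\eta}\ge c>0$ to a clean operator statement, because the truncation to $\Omega$ does not commute with the pseudodifferential calculus: $P_\Omega\mathcal T E_\Omega$ is not literally a standard $\Psi$DO on $\Omega$, and the composition $P_\Omega\cH E_\Omega\cdot P_\Omega(\text{$r$-op})E_\Omega$ differs from $P_\Omega(m_0 r\text{-op})E_\Omega$ by commutator terms supported near $\partial\Omega$. Controlling those boundary commutators---showing they are genuinely lower order and do not destroy invertibility---is the delicate point; in the one-dimensional case it is manageable because the endpoint behaviour of the finite Hilbert transform is classically understood (Tricomi, \cite{tricomi,FH}), which is why the lemma is stated only for $\Omega=I$.
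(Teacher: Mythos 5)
Your overall strategy matches the paper's in its first and last steps: both factor the multiplier as $m=m_0\,r$, both rest the restriction $p\in(1,2)$ on the invertibility of the finite Hilbert transform \eqref{Hf}, and both attribute the $-1/2$ to the degeneration of $r$ near $|z|=1$. But the key middle step is genuinely different, and yours has a gap exactly where you flag it. The paper does \emph{not} try to invert the $r$-factor as an elliptic pseudodifferential operator and compose with $\cH_f^{-1}$; instead it records two concrete endpoint facts about \emph{truncated} operators --- that $\cH_f$ (symbol $m_0$) is Fredholm/invertible on $L_p(I)$ \cite{FH1}, and that the operator with symbol $m_0\eta^{-1}$ maps $W^{-1,p}(I)$ onto $L_p(I)$, which follows from the classical derivative formula for the finite Hilbert transform applied to functions vanishing outside $I$ \cite{FH} --- and then interpolates between order $0$ and order $-1$ to get the order $-1/2$ statement. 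Because both endpoint facts are already statements about $P_\Omega(\cdot)E_\Omega$-type operators taken from the classical literature, no commutator analysis at $\partial\Omega$ is ever needed. Your route, by contrast, invokes Mikhlin--H\"ormander and $\Psi$DO boundedness on the whole line and then must reconcile this with the truncation; you correctly identify that $P_\Omega \cT E_\Omega$ is not a standard $\Psi$DO on $\Omega$ and that the factorization $P_\Omega(m_0 r)E_\Omega \approx \cH_f\cdot P_\Omega(r\text{-op})E_\Omega$ holds only modulo boundary commutators, but you do not close that step --- and for the finite Hilbert transform the endpoint behaviour is not a lower-order nuisance (it is precisely what destroys $L_2$-invertibility and forces $p<2$), so it cannot be waved away as ``genuinely lower order'' without an argument. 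If you want to repair your version, the cleanest fix is to replace your second stage by the paper's: use the known mapping property of the symbol $m_0\eta^{-1}$ on the interval (the derivative formula) as the second interpolation endpoint, rather than trying to run a symbolic calculus through the truncation. (A caveat applicable to both arguments: interpolation of \emph{invertibility} between two different operators at the endpoints is itself only heuristic as stated, so neither write-up is fully airtight on that point.)
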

\begin{proof}
We first reference the fact that finite Hilbert transform is Fredholm  on $L_p(I)$ for $p\in (1,2) \cup (2, \infty) $, in particular (\ref{Hf}) takes place.
\cite{FH1}. 
Moreover, the operator associated with $ m_0(\eta)  \eta^{-1} $ maps $ W^{-1, p }(I)  $ onto $ L_p(I)$, 
for the same range $p \in (1,2)\cup(2,\infty)$. This follows from derivative formula for finite Hilbert transform for functions vanishing outside $I$ \cite{FH}. Thus, by interpolation, the operator associated to the multiplier 
   $m(z, \eta)$  is invertible from $L_p(I)$  into $W^{-1/2, p } (I)$.  Since our analysis is uniform in $\beta$, 
    we have that for $h \in L_2( (0, T) \times I) ) $,
   one obtains that aeroelastic potential  $tr{[\psi]} $  is in $L_2((0, T) , W^{-1/2, p }(I) ) $ for $p < 2 $.
   \end{proof}
   By Sobolev's embeddings $ W^{-1/2, p }(I) ) \subset H^{-1/2 -\epsilon}(I)$ with  $ \epsilon(p) > 0 $ 
   possibly arbitrarily small---justifying the regularity of $tr[\psi]$ in Condition \ref{le:FTR0} when $\Omega = I $. 

\begin{remark}
As discussed above, the generation of semigroups for an arbitrary three dimensional flow is subjected to the validity of the trace Condition \ref{le:FTR0}. While it is believed that this property should be generically true, at the present stage this appears to be an open question in the analysis of singular integrals and depends critically on the geometry of $\Omega$ in two dimensions. 
\end{remark}

\begin{remark}\label{Bal1}
We note that  the  regularity of aeroelastic potential required by  Condition \ref{le:FTR0} in one dimension, follows from the analysis in \cite{bal0,bal4}, where the author proves that 
aeroelastic potential $ \psi \in L_2(0, T , L_q(\Omega) ) $ for $ q < 4/3 $ . Since  for $p > 4 $ there exists $\epsilon > 0 $ 
such that $$H^{1/2 + \epsilon} (\Omega) 
\subset L_p(\Omega) , ~~p > 4 , ~~\text{dim}~~\Omega \leq 2,$$  and one then obtains  that $L_q(\Omega) \subset H^{-1/2 - \epsilon}(\Omega) $ with $q < 4/3 $. \end{remark}
\begin{remark}
The loss of $1/2$ derivative in the characteristic region was already observed and used in the analysis of regularity of 
the aeroelastic potential for the Neumann problem with supersonic velocities $U$ \cite{supersonic}. 
However, in the case of (KJC)  there is an additional loss, due to the necessity of inverting finite Hilbert transform which forces to work with $L_p$ theory for $ p < 2 $. This is due to the fact that  {\it finite} Hilbert transform  is invertible on $L_p$, $p < 2 $ -rather than $ p =2$. 
\end{remark}
 

The author of \cite{bal0,bal4} considers a linear {\em wing}  immersed in a subsonic flow; the wing is taken to have a high aspect ratio thereby allowing for the suppression of the span variable, and reducing the analysis to individual chords normal to the span. By reducing the problem to a one dimensional analysis, many technical hangups are avoided, and Fourier-Laplace analysis is greatly simplified. 

\section{Some Experimental and Computational
Observations Reinforced by  Mathematical Analysis}

In what follows we shall present several observations which, on one hand confirm experimentally and numerically some of the mathematical findings discussed before and, on the other hand, raise open questions and indicate  new avenues for mathematical research. 
\begin{enumerate}
\item{\bf  Stability induced by the flow in reducing dynamics to finite dimensions}:
Experimentally we see that the flow has the ability of inducing stability in the moving structure. This is the case when the structure itself does not possess any mechanical or frictional damping. This dissipative effect is not immediately noticeable in the standard energy balance equation. 
However, rigorous analysis reveals that the stabilizing effect of the flow reduces the structural dynamics to a {\it  finite dimensional } setting. It is remarkable that such conclusion is obtained 
from mathematical considerations---as it is not clearly arrived at by either numerics or experiment. 
\item{\bf Non-uniqueness of final nonlinear state}:
Our analysis of asymptotic behavior asymptotically reduces  the panel dynamics  (in the subsonic case) to the set of stationary solutions.
 However, such stationary solutions are not unique in general.\footnote{
This can be seen from the results in Section 6.3.6 in \cite{springer}
in the subsonic case.}
  In this case it is possible that there exist several \textit{locally} stable equilibria in the global attractor.
This makes it possible to explain the fact---observed experimentally---that the {\em buckled plate} in an aerodynamic flow does not have a final, unique nonlinear state; it can depend on the path which is traveled in the parameter space, and/or initial conditions. See \cite{dowellA} and references cited therein, as well as \cite{dowellB,dowellC,dowellD}. The latter discuss the chaotic responses that may occur for a buckled plate in a gas flow.
\item
{\bf Surprisingly subtle effects of boundary conditions}: 
As already noted, as the structure's boundary conditions are changed, so is the dynamic stability/instability of the system. Experimental and numerical studies reveal that a change of boundary conditions (from clamped to free plates, for instance) can lead from flutter (resulting in periodic solutions), to divergence (resulting in a static buckling). Computationally and experimentally it is found that in subsonic flows the trailing edge boundary condition is the key. If the trailing edge is fixed, then divergence will occur. If it is free, then flutter will occur. A recent computational and experimental study of the effects of boundary conditions on plate flutter, divergence, and limit cycle oscillations has been described in \cite{dowellE}.
Our theoretical results on convergence to equilibria confirm these experimental studies---at least with respect to fixed (clamped) trailing edge. 
\item
{\bf Limitations of VK theory and new nonlinear plate theory}: 
When the leading edge is clamped and the side edges and trailing edge are free flutter and  periodic
oscillations have amplitudes on the order of the plate chord. Thus vK theory is no longer accurate. 
This may explain our mathematical difficulties in carrying the vK  analysis in configurations where a portion of the boundary is free. 
However, a novel, improved nonlinear plate theory has been developed and explored computationally, and correlated with experiment. This provides a novel, challenging, and exciting opportunity for mathematical analysis. A new nonlinear structural theory for cantilevered plates has been developed in \cite{dowellF} and applied to aeroelastic studies including a comparison with experiment in \cite{dowellG}.
\item
{\bf Aerodynamic theory}:
Very recently piston theory has been extended to lower supersonic Mach numbers. This may be helpful to prove mathematically some of the results in the spirit of those given above \cite{newpiston,venedeev2}. Indeed, our analysis of delayed plate indicates that, ultimately, the delayed term (retained for subsonic velocities) does not have any major effect on qualitative behavior. It is rather the dispersive effect (emanating from the ~$-Uu_x$~ term that leads to potential instabilities in the dynamics. 
\item
{\bf Extensions to incorporate solar radiation}:
The pressure due to solar radiation has a similar mathematical form to that of piston theory and has been of recent interest in the context of interplanetary transportation using solar sails. Such sails may also experience flutter due to solar pressure. The nonlinear models for pressure differ, however (aerodynamic versus solar radiation), and represent an opportunity for further mathematical studies. See the work of Dowell in \cite{dowellH} and  Gibbs and Dowell in \cite{dowellI}.
\end{enumerate}

\section{Open Problems}
In what follows below we shall list some open problems which naturally emerge from the material
presented above.
\begin{enumerate}
\item
{\bf Configurations involving a free-clamped plate}: As pointed out in section 1.3, the  {\it free-clamped } boundary conditions imposed on the plate are of great physical interest.
From the mathematical point of view, the difficulty arises at the level of linear theory  when one attempts to construct "smooth" solutions of the corresponding evolution. Typical procedure of extending  plate solutions by zero
outside $\Omega$ leads to the jump in the  Neumann  boundary conditions  imposed on the flow.
In order to contend with this issue, regularization  procedures are needed in order obtain smooth approximations
of the original solutions. While  some regularizations have been already introduced  in \cite{supersonic},
more studies are needed in order to demonstrate the effectiveness  of this "smoothing" for the  large array of problems described in this work.  Free boundary conditions in the context of piston theory and boundary dissipation have been recently studied in \cite{bociu} and also \cite{springer}.
\item {\bf Kutta-Joukowsky boundary conditions for the flow in higher dimensions}: 
As explained above, the (KJC) are used in  the models of panels that are partially free.
Jump of the pressure off the wing is a typical configuration \cite{bal0}. From the mathematical point of view
the difficulty lies, again,  at the level of the  linear theory. In order to deal with   the effects of the {\it unbounded}
traces $tr[\psi] $ in the energy relation microlocal calculus is necessary. This
has been successfully accomplished in \cite{supersonic} where clamped boundary conditions in the supersonic case
were considered. However, in the case of (KJC)  there  is an additional difficulty that involves
``invertibility" of  {\it finite} Hilbert (resp. Riesz) transforms.  This latter property is known to fail within $L_2$ framework, thus it is necessary to build the  $L_p$ theory, $p \ne 2 $.  This was for the first time observed in \cite{bal4}
and successfully resolved in the  one dimensional case. However, any progress to higher dimensions depends
on the validity of the corresponding harmonic analysis result developed for finite Riesz transforms.
Some preliminary results  in the subsonic case are in \cite{KJ}.

\item {\bf The transonic regime}: 
The treatment presented above excludes the  transonic velocity $ U =1$.
Indeed, for $U=1$ the analysis  provided breaks down in the essential way \cite{transnon}.
While numerical, experimental work  predicts appearance of shock waves \cite{C}, to our best knowledge  no mathematical treatment of this problem is  available at present.
 \item {\bf Nonlinear flow models}: 
 Finally, the ultimate goal is to consider a fully   nonlinear flow model. Experimental-numerical results predicting shock waves  in the evolution  are partially
 available \cite{C}. However the mathematical aspects of   this problem are  presently wide open.
 \item {\bf Rigorous comparison (asymptotics) of delay plate dynamics and piston theoretic dynamics}:  There are also a question of justification of the ``piston'' theory at the level of global attractors  via hypersonic limit $U\to+\infty$. This requires asymptotic ($U \rightarrow \infty$)  closeness of the corresponding solution. (See the {\bf Open Question} earlier in the text.)
\end{enumerate}


\section{Acknowledgment}
The authors would like to dedicate this work to Professor A.V. Balakrishnan, whose pioneering and insightful work on flutter brought together engineers and mathematicians alike. 

E.H. Dowell was partially supported by the National Science Foundation with grant NSF-ECCS-1307778.
I. Lasiecka was partially supported by the National Science Foundation with grant NSF-DMS-0606682 and the United States Air Force Office of Scientific Research with grant AFOSR-FA99550-9-1-0459.
 J.T. Webster was partially supported by National Science Foundation with grant NSF-DMS-1504697.


\begin{thebibliography}{99}

\bibitem{bal} A. V. Balakrishnan and K.W. Iliff, A continuum aeroelastic model for inviscid
subsonic wing flutter, \textit{J. of Aerospace Engineering}, 20.3 (2007), pp. 152--164.

\bibitem{bal0} A.V.~Balakrishnan, Aeroelasticity---Continuum Theory, \textit{Springer-Verlag}, 2012.

\bibitem{bal4} A.V.~Balakrishnan, Nonlinear aeroelasticity, continuum theory, flutter/divergence speed, plate wing model, Free and Moving Boundaries, Lecture Notes, \textit{Pure.~A.~Math.},  ~252, \textit{Chapman \& Hall}, FL, 2007,  pp. 223--244.



\bibitem{bal3}
A. V. Balakrishnan and M.A. Shubov,  Asymptotic behaviour of the aeroelastic modes for an aircraft wing model in a subsonic air flow, {\em Proc. R. Soc. Lond. Ser. A Math. Phys. Eng. Sci.}, 460 (2004),~  pp.1057--1091.

\bibitem{bal-shubov} A.V. Balakrishnan, M. Shubov, and C. Peterson, Spectral analysis of coupled Euler-Bernoulli and Timoshenko beam model, {\em ZAMM Z. Angew. Math. Mech.}, 84 (2004), no. 5, pp. 291--313. 

\bibitem{bal-shubov1}  Balakrishnan, A. V.; Shubov, Marianna A. Asymptotic and spectral properties of operator-valued functions generated by aircraft wing model, {\em Math. Methods Appl. Sci.}, 27 (2004), no. 3, pp. 329--362. 

\bibitem{transnon} O.O. Bendiksen, G. Seber, Fluid-structure interactions with both structural and fluid nonlinearities, {\em J. Sound and Vibration}, 315 (2008), pp. 664--684.


\bibitem{bociu} L. Bociu and D. Toundykov. Attractors for non-dissipative irrotational von Karman plates with boundary damping. {\it J. Differential Equations},  253 (2012), pp 3568--3609.

\bibitem{bolotin}
V.V. Bolotin, Nonconservative problems of elastic stability,
 {\em Pergamon Press}, Oxford, 1963.

\bibitem{b-c} A. Boutet de Monvel and I. Chueshov, The problem of
interaction of von Karman plate with subsonic flow gas, \textit{Math. Methods in
Appl. Sc.}, 22 (1999), pp. 801--810.

\bibitem{LBC96}
  L. Boutet de Monvel and I. Chueshov,
 Non-linear oscillations of a plate in a flow of gas, C.R.
Acad. Sci. Paris, Ser.I, 322 (1996), pp. 1001--1006.

\bibitem{b-c-1}
L. Boutet de Monvel and I. Chueshov, Oscillation of von Karman's plate in a potential flow of gas, \emph{ Izvestiya RAN: Ser. Mat.} 63 (1999), pp. 219--244.

\bibitem{oldchueshov1}
L. Boutet de Monvel, I. Chueshov, and A. Rezounenko, Long-time behaviour of strong solutions of retarded nonlinear PDEs, \textit{Comm. PDEs}, 22 (1997), pp. 1453--1474.







\bibitem{hypersonic} I.D. Chueshov, Asymptotic behavior of the solutions of a problem on the aeroelastic oscillations of a shell in hypersonic Limit, 
 \emph{Teor. Funktsii Funktsional. Anal. i Prilozhen.} 51, (1989),  137--141 (in Russian); tranaslation in
 \textit{J. Mathematical Sci.}, 52 (1990), pp. 3545--3548.


\bibitem{Chu92b}
 I. Chueshov,  On a certain system of equations
 with delay, occurring in aeroelasticity,
\emph{Teor. Funktsii Funktsional. Anal. i Prilozhen.}  54 (1990), pp. 123--130 (in Russian);
translation in
{\it J. Soviet Math.}, 58 (1992), pp. 385--390.
%


\bibitem{chuey}
I. Chueshov, Dynamics of von Karman plate in a potential flow of gas: rigorous results and unsolved problems, Proceedings of the 16th IMACS World Congress, Lausanne (Switzerland), 2000, pp. 1--6.

\bibitem{igor} I. Chueshov.  Remark on an elastic plate interacting with a gas in a
semi-infnite tube: periodic solutions, preprint 2015. 



%
%
\bibitem{ch-dqsds}
I.Chueshov, Dynamics of Quasi-Stable Dissipative systems,
{\em Springer}, New York, 2015.

\bibitem{CDLW-eng}
I. Chueshov, E. Dowell, I. Lasiecka and J. T. Webster,
paper in preparation.


\bibitem{chlJDE04} I. Chueshov and I. Lasiecka, Attractors for second-order
evolution equations with a nonlinear damping, \textit{J. of Dyn. and Diff.
Equations},  16 (2004), 469--512.

\bibitem{ch-l} I. Chueshov and I. Lasiecka, Long-time behavior of
second-order evolutions with nonlinear damping, \textit{Memoires of AMS}, v.
195, 2008.



\bibitem{springer} I. Chueshov and I. Lasiecka, Von Karman Evolution
Equations, Wellposedness and Long Time Behavior, Monographs, {\em Springer-Verlag},  2010.

\bibitem{jadea12}
 I. Chueshov and I. Lasiecka, Generation of a semigroup and hidden regularity in nonlinear subsonic flow-structure interactions with absorbing boundary conditions. \emph{Jour. Abstr. Differ. Equ. Appl.} 3 (2012), pp. 1--27.

\bibitem{delay}
\newblock I. Chueshov, I. Lasiecka and J. T. Webster,
\newblock Attractors for delayed, non-rotational von Karman plates with applications to flow-structure interactions without any damping,
\newblock {\em Comm. in PDE}, {\bf 39} (2014), pp. 1965--1997.

\bibitem{cl-hcdte}
I. Chueshov  and I. Lasiecka,
 Well-posedness and long time behavior in nonlinear dissipative hyperbolic-like evolutions with critical exponents.
In:
\emph{Nonlinear Hyperbolic PDEs, Dispersive and Transport Equations},
 HCDTE Lecture Notes, Part I, G. Alberti et al. (Eds.), AIMS on Applied Mathematics, 6,
 AIMS, Springfield, 2013, pp. 1--96.


\bibitem{supersonic}
\newblock I. Chueshov, I. Lasiecka and J. T. Webster,
\newblock {Evolution semigroups for supersonic flow-plate interactions},
\newblock \emph{J. of Diff. Eqs.}, 254 (2013), pp. 1741--1773.

\bibitem{fereisel}
\newblock I. Chueshov, I. Lasiecka and J. T. Webster,
\newblock Flow-plate interactions: Well-posedness and long-time behavior,
\newblock \emph{Discrete Contin. Dyn. Syst. Ser. S, Special Volume: New Developments in Mathematical Theory of Fluid Mechanics}, \textbf{7} (2014), pp. 925--965.

\bibitem{oldchueshov2} I. Chueshov and A. Rezounenko, Global attractors for a class of retarded quasilinear partial differential equations, \textit{C. R. Acad. Sci. Paris}, Ser. 1, 321 (1995), pp. 607--612.

\bibitem{CR2014} I. Chueshov and  A. Rezounenko,
Dynamics of second order in time evolution equations with state-dependent delay,
\textit{Nonlinear Anal. A},
123  (2015), pp. 126--149.

\bibitem{ChuRez-ArX2014} I. Chueshov and  A. Rezounenko,
Finite-dimensional global attractors for parabolic nonlinear equations with state-dependent delay.
 \textit{Commun. Pure Appl. Anal.}, 14 (2015), pp. 1685--1704.



%
%
%


\bibitem{ciarlet} P. Ciarlet and P. Rabier, Les Equations de Von Karman,
{\em Springer}, 1980.


\bibitem{FH1} K.F. Clancey, On Finite Hilbert Transforms, {\em Trans. AMS}, 212 (1975), pp. 347--354.
 

\bibitem{Delay-book1995}
O. Diekmann,  S. van Gils,  S. Lunel, and H.-O. Walther,
Delay Equations, {\em Springer},
1995.

\bibitem{dowellnon}
E. Dowell, Nonlinear Oscillations of a Fluttering Plate, I and II, AIAA J.,  4, (1966), pp. 1267--1275;
 and 5, (1967), pp. 1857--1862.

\bibitem{B} E. Dowell,    Panel flutter--A review of the aeroelastic stability of plates and shells.
AIAA Journal,  8 (1970), pp. 385--399.

\bibitem{C} E. Dowell, O. Bendiksen, J. Edwards, T. Strganac, Transonic Nonlinear Aeroelasticity,  Enc. of Aerospace Engin., {\em Wiley}, 2010.

\bibitem{C1} E.H. Dowell and O. Bendiksen, Panel Flutter, Enc. of Aerospace Engin., {\em Wiley}, 2010.

\bibitem{newpiston} E. Dowell and D.B. Bliss, New Look at Unsteady Supersonic Potential Flow Aerodynamics and Piston Theory, {\em AIAA J.}, 51.9 (2013), pp. 2278--2281.


\bibitem{dowellH} E.H. Dowell, Can Solar Sails Flutter?, {\em AIAA J.}, 49 (2011), pp. 1305--1307.

 \bibitem{dowellA}
E. Dowell, Aeroelasticity of Plates and Shells, {\em Nordhoff}, Leyden, 1975.

\bibitem{dowell1}
E. Dowell, A Modern Course in Aeroelasticity, {\em Kluwer Academic Publishers}, 2004.

\bibitem{dowellB}
E.H. Dowell, Flutter of a buckled plate as an example of chaotic motion of a deterministic autonomous system, {\em J. Sound and Vibration}, 85 (1982), pp. 333--344.






\bibitem{hormander} L. Hormander, The Analysis of Linear Partial Differential Operators, {\em Springer}, 2009.

\bibitem{HP02}
D.H. Hodges, G.A. Pierce,  Introduction to Structural Dynamics
and Aeroelasticity, {\em Cambridge Univ. Press}, 2002.


\bibitem{dowellI} S.C. Gibbs and E.H. Dowell, Membrane Paradox for Solar Sails, {\em AIAA J.}, 52 (2014), pp. 2904--2906.

\bibitem{gibbs} S.C. Gibbs, I. Wang, E. Dowell, Theory and experiment for flutter of a rectangular plate with a fixed leading edge in three-dimensional axial air flow, {J. Fluids Structures}, 34 (2012), pp. 68--83.

\bibitem{dowellE} S.C. Gibbs, I. Wang, E. Dowell, Stability of Rectangular Plates in Subsonic Flow with Various Boundary Conditions, {\em J. Aircraft}, 52 (2015), pp. 429--451.



\bibitem{oldpiston} A.A. Il'ushin, The plane sections law in aerodynamics of large supersonic speeds, {\em Prikladnaya Matem. Mech.}, 20 (1956), no.6, pp. 733--755 (in Russian).


%




\bibitem{lagnese}
J. Lagnese, Boundary Stabilization of Thin Plates, \textit{SIAM}, 1989.



\bibitem{cbms} I. Lasiecka, Mathematical Control Theory of Coupled PDE's,
CMBS-NSF Lecture Notes, \textit{SIAM}, 2002.



\bibitem{redbook}
I. Lasiecka and R. Triggiani, Control Theory for Partial Differential Equations, vol. I, II,  {\em Cambridge University Press}, 2000.




\bibitem{KJ}
I. Lasiecka and J.T. Webster, Kutta-Joukowski flow conditions in flow-plate interactions: subsonic case, {\em Nonlinear Anal. B}, 7 (2014), pp. 171--191.

\bibitem{conequil1}
\newblock I. Lasiecka and J. T. Webster,
\newblock Eliminating flutter for clamped von Karman plates immersed in subsonic flows,
\newblock {\em Comm. Pure Appl. Math.}, 13 (2014), pp.1935--1969,
\newblock Updated version (May, 2015): {\url{http://arxiv.org/abs/1409.3308}}.

\bibitem{conequil2}
\newblock I. Lasiecka and J. T. Webster,
\newblock Feedback stabilization of a fluttering panel in an inviscid subsonic potential flow,
\newblock  submitted, June 2014,
\newblock Preprint: {\url{http://arxiv.org/abs/1506.05704}}.


\bibitem{Li03}
E. Livne,
Future of Airplane Aeroelasticity,
 J. of Aircraft,  40 (2003), pp. 1066--1092.

\bibitem{miyatake} S. Miyatake, Mixed problem for hyperbolic equation of second order,  \textit{J. Math. Kyoto Univ.}, 13 (1973),  pp. 435--487.




\bibitem{ryz}
I. Ryzhkova, Stabilization of a von Karman plate in the presence of thermal effects in a subsonic potential flow of gas, {\em J. Math. Anal. and Appl.}, 294 (2004), pp. 462--481.

\bibitem{ryz2}
I. Ryzhkova,
Dynamics of a thermoelastic von Karman plate in a subsonic gas flow,
{\em Zeitschrift  Ang. Math. Phys.},
 58 (2007), pp. 246--261.
 
   \bibitem{sakamoto}
R. Sakamoto, Mixed  problems for hyperbolic equations, {\em J. Math. Kyoto Univ}, 2 (1970), pp.  349--373.
 
 \bibitem{savare}
G. Savare, Regularity and perturbations results for mixed second order elliptic problems {\it Comm.  PDEs}, 22 (1997), nos. 5--6, pp. 869--900.




\bibitem{shubov2}
M.  Shubov, Asymptotical form of Possio integral equation in theoretical aeroelasticity, {\em Asymptot. Anal.},
64 (2009), pp. 213--238.
 
\bibitem{shubov1}
M. Shubov,  Solvability of reduced Possio integral equation in theoretical aeroelasticity, {\em Adv. Differential Equations}, 15 (2010), pp. 801--828.

\bibitem{dowellG} D. Tang, S.C. Gibbs, and E.H. Dowell, Nonlinear Aeroelastic Analysis with Inextensible Plate Theory Including Correlation with Experiment, {\em AIAA J.}, 53 (2015), pp. 1299--1308.

\bibitem{tang} D.M. Tang, H. Yamamoto, E.H. Dowell, Flutter and limit cycle oscillations of two dimensional panels in three-dimensional axial flow, {\em J. Fluids and Structures}, 17 (2003), pp.  225--242.

\bibitem{dowellF} D. Tang, M. Zhao, and E.H. Dowell, Inextensible Beam and Plate Theory: Computational Analysis and Comparison with Experiment, {\em J. Appl. Mech.}, 81.6 (2014), 061009.


\bibitem{taylor} M.E. Taylor, Pseudodifferential Operators, {\em Springer}, New York, 1996.
%


\bibitem{temam} R. Temam, Infinite Dimensional Dynamical Systems in
Mechanics and Physics, \textit{Springer-Verlag}, 1988.

 \bibitem{tricomi} F.G. Tricomi, Integral Equations, { \em Interscience Publishers Inc.}, New York, 1957.

\bibitem{jfs}
V. Vedeneev, Effect of damping on flutter of simply supported and clamped panels at low supersonic speeds ,{\em 
J. of Fluids and Structures}, 40 (2013), pp. 366--372. 

\bibitem{venedeev2} V.V. Vedeneev, Panel flutter at low supersonic speeds, {\em J. Fluids and Structures}, 29 (2012), pp.  79--96.

\bibitem{FH} H. Widom, Integral Equations in $L_p$, {\em Trans. AMS}, 97 (1960), pp.  131--160.





\bibitem{wu-1996} J. Wu,
Theory and Applications of Partial Functional Differential Equations,
{\em Springer}  1996.


\bibitem{webster} J.T. Webster, Weak and strong solutions of a nonlinear
subsonic flow-structure interaction: semigroup approach, \textit{
Nonlinear Anal. A},  74 (2011), pp. 3123--3136.

\bibitem{dowellC} D. Xie, M. Xu, H. Dai, and E.H. Dowell, Observation and evolution of chaos for a cantilever plate in supersonic flow, {\em J. Fluids and Structures}, 50 (2014), pp. 271--291.

\bibitem{dowellD} D. Xie, M. Xu, H. Dai, and E.H. Dowell, Proper orthogonal decomposition method for analysis of nonlinear panel flutter with thermal effects in supersonic flow, {\em J. Sound and Vibration}, 337 (2015), pp. 263--283.

\end{thebibliography}
\end{document}